\title{Strongly Liftable Schemes and the Kawamata-Viehweg Vanishing 
in Positive Characteristic II
\footnote{This paper was partially supported by the National Natural Science 
Foundation of China (Grant No.\ 10901037) and the Ph.D.\ Programs Foundation 
of Ministry of Education of China (Grant No.\ 20090071120004).}}
\author{Qihong Xie}
\date{}
\theoremstyle{plain}
\newtheorem{prop}{Proposition}[section]
\newtheorem{lem}[prop]{Lemma}
\newtheorem{thm}[prop]{Theorem}
\newtheorem{cor}[prop]{Corollary}
\theoremstyle{definition}
\newtheorem{defn}[prop]{Definition}
\newtheorem*{ack}{Acknowledgments}
\newtheorem*{nota}{Notation}
\theoremstyle{remark}
\newtheorem{rem}[prop]{Remark}
\newtheorem{ex}[prop]{Example}
\newcommand{\Q}{\mathbb Q}
\newcommand{\R}{\mathbb R}
\newcommand{\C}{\mathbb C}
\newcommand{\Z}{\mathbb Z}
\newcommand{\F}{\mathbb F}
\newcommand{\A}{\mathbb A}
\newcommand{\PP}{\mathbb P}
\newcommand{\OO}{\mathcal O}
\newcommand{\II}{\mathcal I}
\newcommand{\EE}{\mathcal E}
\newcommand{\HH}{\mathcal H}
\newcommand{\LL}{\mathcal L}
\newcommand{\GG}{\mathcal G}
\newcommand{\FF}{\mathcal F}
\newcommand{\QQ}{\mathcal Q}
\newcommand{\cA}{\mathcal A}
\newcommand{\Pic}{\mathop{\rm Pic}\nolimits}
\newcommand{\Div}{\mathop{\rm Div}\nolimits}
\newcommand{\Supp}{\mathop{\rm Supp}\nolimits}
\newcommand{\Sing}{\mathop{\rm Sing}\nolimits}
\newcommand{\ch}{\mathop{\rm char}\nolimits}
\newcommand{\Image}{\mathop{\rm Im}\nolimits}
\newcommand{\Spec}{\mathop{\bf Spec}\nolimits}
\newcommand{\spec}{\mathop{\rm Spec}\nolimits}
\newcommand{\Trace}{\mathop{\rm Trace}\nolimits}
\newcommand{\divisor}{\mathop{\rm div}\nolimits}
\newcommand{\ra}{\rightarrow}
\newcommand{\wt}{\widetilde}
\newcommand{\la}{\lambda}
\begin{document}

\maketitle

\begin{abstract}
A smooth scheme $X$ over a field $k$ of positive characteristic 
is said to be strongly liftable, if $X$ and all prime divisors 
on $X$ can be lifted simultaneously over $W_2(k)$. 
In this paper, first we prove that smooth toric varieties are strongly 
liftable. As a corollary, we obtain the Kawamata-Viehweg vanishing theorem 
for smooth projective toric varieties. Second, we prove the Kawamata-Viehweg 
vanishing theorem for normal projective surfaces which are birational to a 
strongly liftable smooth projective surface. Finally, we deduce the cyclic 
cover trick over $W_2(k)$, which can be used to construct a large class of 
liftable smooth projective varieties.
\end{abstract}

\setcounter{section}{0}
\section{Introduction}\label{S1}

Throughout this paper, we always work over {\it an algebraically 
closed field $k$ of characteristic $p>0$} unless otherwise stated. 
A smooth scheme $X$ is said to be strongly liftable, if $X$ and 
all prime divisors on $X$ can be lifted simultaneously over $W_2(k)$. 
This notion was first introduced in \cite{xie10} to study the 
Kawamata-Viehweg vanishing theorem in positive characteristic, 
furthermore, some examples and properties of strongly liftable schemes 
were also given in \cite{xie10}.

In this paper, we shall continue to study strongly liftable schemes. 
First of all, we find an important class of strongly liftable schemes 
with simple structures.

\begin{thm}\label{1.1}
Smooth toric varieties are strongly liftable over $W_2(k)$.
\end{thm}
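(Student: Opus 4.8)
The plan is to use the purely combinatorial, hence characteristic-free, description of toric geometry provided by Cox's homogeneous coordinate (quotient) construction, and to produce the lift of an arbitrary prime divisor by lifting a single polynomial coefficientwise. Write $X = X_\Sigma$ for a fan $\Sigma$ in a lattice $N$. I first reduce to the case where $X$ has no torus factor: if the rays of $\Sigma$ fail to span $N_\R$, then $X$ is an open toric subvariety of the smooth toric variety obtained by replacing each torus factor $\mathbb{G}_m = \A^1 \setminus \{0\}$ by $\A^1$, and this larger variety has no torus factor. Since an open subscheme of a strongly liftable scheme is again strongly liftable — restrict the lift, and lift a prime divisor by lifting the closure of the divisor in the larger scheme — it suffices to treat the no-torus-factor case.

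In that case Cox's theorem presents $X$ as a geometric quotient $X = (\A^{\Sigma(1)} \setminus Z)/G$, where $S = k[x_\rho : \rho \in \Sigma(1)]$ is the total coordinate ring graded by the divisor class group $\mathrm{Cl}(X)$, the diagonalizable group $G = \Hom(\mathrm{Cl}(X), \mathbb{G}_m)$ acts through this grading, and $Z$ is the irrelevant locus determined combinatorially by $\Sigma$; smoothness of $X$ guarantees that $G$ acts freely and that the quotient is geometric. The entire package consisting of $S$, its grading, $Z$, and $G$ is defined over $\Z$, so base change along $\Z \to W_2(k)$ yields the canonical lift $\tilde X = (\A^{\Sigma(1)}_{W_2(k)} \setminus \tilde Z)/\tilde G$, smooth over $W_2(k)$ and reducing to $X$ modulo $p$.

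Now let $D \subset X$ be any prime divisor and $\pi : \A^{\Sigma(1)} \setminus Z \to X$ the quotient map. Because $S$ is a UFD, the $G$-invariant effective divisor $\pi^{-1}(D)$ is cut out by a single $\mathrm{Cl}(X)$-homogeneous polynomial $f \in S$. I lift $f$ coefficientwise to $\tilde f \in \tilde S = W_2(k)[x_\rho]$, homogeneous of the same degree and reducing to $f$ modulo $p$; its zero locus in $\A^{\Sigma(1)}_{W_2(k)} \setminus \tilde Z$ is $\tilde G$-invariant and therefore descends, through the torsor $\tilde\pi$, to a closed subscheme $\tilde D \subset \tilde X$. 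Since $\tilde X$ is flat over $W_2(k)$ and $f$ is a nonzerodivisor on the special fiber, $\tilde f$ is a nonzerodivisor, so $\tilde D$ is an effective Cartier divisor, flat over $W_2(k)$, reducing to $D$. As the single lift $\tilde X$ carries such a $\tilde D$ for every prime divisor $D$ simultaneously, $X$ is strongly liftable.

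The step I expect to demand the most care is the compatibility of the geometric quotient with reduction modulo $p$: I must verify that forming the quotient by the lifted group $\tilde G$ commutes with base change (so that $\tilde X$ genuinely lifts $X$ rather than some other scheme) and that $\tilde G$-invariant divisors upstairs descend to honest divisors downstairs. This is exactly where freeness of the $G$-action off $Z$ — equivalently the smoothness of $X$ — and the $\mathrm{Cl}(X)$-grading of $S$ are essential; everything else, including the coefficientwise lifting of $f$ and the reduction removing torus factors, is routine.
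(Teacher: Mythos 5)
Your proposal is correct in substance, but it routes through genuinely different machinery than the paper, and the comparison is instructive. The paper (proof of Theorem \ref{3.2}) also takes the tautological lift $\wt{X}=X(\Delta,W_2(k))$, but then verifies the two hypotheses of its lifting criterion, Proposition \ref{2.5}: every invertible sheaf on $X$ equals $\OO_X(D)$ for a torus-invariant divisor $D$, which lifts by re-reading its combinatorial data over $W_2(k)$; and $H^0(X,\OO_X(D))=\bigoplus_{u\in P_D\cap M}k\cdot\chi^u$ while $H^0(\wt{X},\OO_{\wt{X}}(\wt{D}))=\bigoplus_{u\in P_D\cap M}W_2(k)\cdot\chi^u$, so restriction on sections is surjective and the section cutting out any given prime divisor lifts coefficientwise. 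Your Cox-ring argument is the same engine in homogeneous coordinates---the graded piece $S_\beta$ is spanned by exactly the monomials indexed by $P_D\cap M$, and lifting $f$ coefficientwise is lifting that section coefficientwise---but you then descend $V(\tilde f)$ along the quotient map instead of invoking the criterion. What your route buys is an explicit, equation-level lift of every prime divisor and independence from the criterion of \cite{xie10}; what it costs is that Cox's quotient theorem must be established over $W_2(k)$ (equivalently over $\Z$, with compatibility of the quotient under base change), which is not off-the-shelf: the standard references work over a field, and for non-complete smooth toric varieties $\mathrm{Cl}(X)$ may have torsion (the fan in $\Z^2$ with rays through $(1,0)$ and $(1,2)$ and no two-dimensional cone has $\mathrm{Cl}(X)\cong\Z/2\Z$), so when $p$ divides its order the group $G$ contains $\mu_p$, is non-smooth, the action is free only in the scheme-theoretic sense, and the quotient map is an fppf torsor that is neither \'etale nor Zariski-locally trivial. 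Your argument survives this---strong gradedness of the localized Cox ring over each chart $U_\sigma$, which follows from smoothness of the cones, gives the torsor property over any base ring, and fppf descent of the graded ideal $(\tilde f)$ replaces any picture of a locally trivial quotient---so this is a surmountable burden rather than a gap, but it is heavier than your parenthetical ``freeness equals smoothness'' suggests, and it is precisely the formalism the paper's chart-by-chart proof avoids. Your auxiliary steps (open subschemes of strongly liftable schemes are strongly liftable, via closures of divisors; eliminating torus factors; flatness of $\tilde D$ from the nonzerodivisor criterion) are all correct, and the last of these is exactly what Proposition \ref{2.5} encapsulates for the paper.
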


As a consequence, we obtain the Kawamata-Viehweg vanishing theorem 
on smooth projective toric varieties for ample $\Q$-divisors which are 
not necessarily torus invariant.

\begin{cor}\label{1.2}
Let $X$ be a smooth projective toric variety of dimension $d$, 
$H$ an ample $\Q$-divisor on $X$, and $D$ a simple normal crossing 
divisor containing $\Supp(\langle H\rangle)$. Then 
\[ H^i(X,\Omega_X^j(\log D)(-\ulcorner H\urcorner))=0 \,\,\,\,
\hbox{holds for any} \,\,\,\, i+j<\inf(d,p). \]
In particular, $H^i(X,K_X+\ulcorner H\urcorner)=0$ holds 
for any $i>d-\inf(d,p)$.
\end{cor}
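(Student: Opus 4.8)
The plan is to obtain the statement as a direct application of the Kawamata--Viehweg vanishing theorem for strongly liftable smooth projective varieties established in \cite{xie10}, using Theorem \ref{1.1} only to verify its hypotheses. First I would note that, by Theorem \ref{1.1}, the smooth projective toric variety $X$ is strongly liftable over $W_2(k)$; by definition this means that $X$ together with every prime divisor on $X$ admits a simultaneous lifting over $W_2(k)$. Since $D$ is a simple normal crossing divisor, each of its prime components lifts, and I would check that these component liftings assemble into a lifting of the reduced pair $(X,D)$ over $W_2(k)$ whose total space is again smooth with $D$ lifting to a relative normal crossing divisor. This is precisely the input required to run the logarithmic Deligne--Illusie decomposition of $\tau_{<p}\,F_*\Omega_X^\bullet(\log D)$ in degrees $<p$, which underlies the vanishing theorem of \cite{xie10}.

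Next, since $\Supp(\langle H\rangle)\subseteq D$ and $H$ is an ample $\Q$-divisor, I would apply that vanishing theorem to the pair $(X,D)$ and the divisor $H$. The fractional coefficients are handled by the round-up/round-down bookkeeping already incorporated there: concretely one uses $\ulcorner H\urcorner=H+\langle -H\rangle$, so that $-\ulcorner H\urcorner$ differs from the ample class $-H$ by an effective divisor supported on $D$, which allows the ampleness to be combined with the degeneration coming from the lifted pair. This yields $H^i(X,\Omega_X^j(\log D)(-\ulcorner H\urcorner))=0$ for all $i+j<\inf(d,p)$, which is the first assertion.

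For the ``in particular'' statement I would specialize to $j=0$, obtaining $H^i(X,\OO_X(-\ulcorner H\urcorner))=0$ for every $i<\inf(d,p)$, and then invoke Serre duality on the smooth projective variety $X$ of dimension $d$: the group $H^i(X,\OO_X(-\ulcorner H\urcorner))$ is dual to $H^{d-i}(X,\OO_X(K_X+\ulcorner H\urcorner))$. Reindexing by $m=d-i$, the vanishing in the range $i<\inf(d,p)$ translates into $H^m(X,K_X+\ulcorner H\urcorner)=0$ for all $m>d-\inf(d,p)$, as claimed.

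The main obstacle is not the final cohomological bookkeeping but the passage from liftability of $X$ to liftability of the whole pair $(X,D)$: the Deligne--Illusie method needs the reduced divisor $D$ to lift together with $X$ as a relative normal crossing divisor, and it is exactly the strength of the notion ``strongly liftable'' --- lifting $X$ and all prime divisors at once --- that secures this. Verifying that the component liftings supplied by Theorem \ref{1.1} can be chosen compatibly, so that their union remains normal crossing over $W_2(k)$, is the delicate point; once the pair is lifted, both the main vanishing and its dual form follow formally.
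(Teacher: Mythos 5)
Your proposal is correct and follows essentially the same route as the paper, whose proof of this corollary is simply the combination of Theorem \ref{3.2} (strong liftability of smooth toric varieties) with Theorem \ref{2.8} (Hara's Kawamata--Viehweg vanishing under a $W_2(k)$-lifting of the pair $(X,D)$): strong liftability per Definition \ref{2.4} lifts every prime component of $D$ over one fixed $\wt{X}$, so the pair $(X,D)$ lifts, and Lemma \ref{2.2} automatically makes that lifting smooth with $\wt{D}$ relatively simple normal crossing. The only point worth noting is that the compatibility you flag as ``delicate'' is in fact immediate from the definitions, and the Serre-duality step you spell out for the ``in particular'' clause is already built into the statement of Theorem \ref{2.8}.
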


Second, we generalize \cite[Theorem 1.4]{xie10} slightly to the case 
where no singularity assumption is made.

\begin{thm}\label{1.3}
Let $X$ be a normal projective surface and $H$ a nef and big $\Q$-divisor 
on $X$. If $X$ is birational to a strongly liftable smooth projective 
surface $Z$, then $H^1(X,K_X+\ulcorner H\urcorner)=0$ holds.
\end{thm}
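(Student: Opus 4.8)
The plan is to reduce the vanishing on $X$ to Kawamata--Viehweg vanishing on a strongly liftable smooth resolution, arranging the comparison of the two surfaces so that it is concentrated at the finitely many singular points of $X$; in this way the (possibly non-rational) singularities never obstruct $H^1$.

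First I would build a good resolution. Let $\phi\colon X\dra Z$ be the given birational map. Normalizing the graph of $\phi$ and then resolving (resolution of surface singularities and of indeterminacy being available in all characteristics), I obtain a smooth projective surface $Y$ with birational morphisms $f\colon Y\to X$ and $h\colon Y\to Z$. Since $h$ is a birational morphism of smooth projective surfaces, it factors as a finite sequence of blow-ups at closed points; as blowing up a closed point preserves strong liftability (the relevant fact from \cite{xie10}) and $Z$ is strongly liftable, $Y$ is strongly liftable. After finitely many further blow-ups (which keep $Y$ strongly liftable and keep $f$ a morphism) I may assume $f$ is a log resolution, so that $\Exc(f)\cup\Supp(f^{-1}_\ast H)$ is simple normal crossing, where $f^{-1}_\ast H$ denotes the strict transform.

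Next I would run the vanishing on $Y$. Using Mumford's numerical pullback, $f^\ast H$ is the $\Q$-divisor with $f^\ast H\cdot E_i=0$ for every $f$-exceptional curve $E_i$; it is again nef and big, and $\Supp\langle f^\ast H\rangle$ is simple normal crossing by the previous step. Set $G:=K_Y+\ulcorner f^\ast H\urcorner$. Since $Y$ is a strongly liftable smooth projective surface and $f^\ast H$ is nef and big, \cite[Theorem 1.4]{xie10} (equivalently, the case $(i,j)=(1,0)$ of the logarithmic vanishing together with Serre duality on $Y$) gives $H^1(Y,\OO_Y(G))=0$. The five-term exact sequence of the Leray spectral sequence for $f$ and $\OO_Y(G)$ then yields an injection $H^1(X,f_\ast\OO_Y(G))\hookrightarrow H^1(Y,\OO_Y(G))=0$, so $H^1(X,f_\ast\OO_Y(G))=0$. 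It remains to pass from $f_\ast\OO_Y(G)$ to $\OO_X(K_X+\ulcorner H\urcorner)$. The non-exceptional part of $G$ is $f^{-1}_\ast K_X+\ulcorner f^{-1}_\ast H\urcorner=f^{-1}_\ast(K_X+\ulcorner H\urcorner)$, so the two sheaves agree over the smooth locus $X\setminus\Sing(X)$. Comparing sections valuatively, every local section of $f_\ast\OO_Y(G)$ satisfies the defining inequalities of $\OO_X(K_X+\ulcorner H\urcorner)$ together with the extra conditions imposed along the exceptional curves; hence there is an inclusion $f_\ast\OO_Y(G)\subseteq\OO_X(K_X+\ulcorner H\urcorner)$ whose cokernel $\QQ$ is supported on the finite set $\Sing(X)$. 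As $\QQ$ has finite length, $H^1(X,\QQ)=0$, and the long exact sequence of $0\to f_\ast\OO_Y(G)\to\OO_X(K_X+\ulcorner H\urcorner)\to\QQ\to0$ exhibits $H^1(X,K_X+\ulcorner H\urcorner)$ as a quotient of $H^1(X,f_\ast\OO_Y(G))=0$, whence $H^1(X,K_X+\ulcorner H\urcorner)=0$.

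The step I expect to be the main obstacle is precisely this last comparison. Without any hypothesis on the singularities, $R^1f_\ast\OO_Y(G)$ need not vanish and $f_\ast\OO_Y(G)$ need not equal $\OO_X(K_X+\ulcorner H\urcorner)$: when the discrepancies over a bad point are very negative, the exceptional conditions genuinely cut down the sections, so only an inclusion (not the equality used in the rational-singularity case of \cite{xie10}) is available. The device that makes the argument go through is to accept this inclusion and observe that its cokernel is a skyscraper; then all positivity is spent on the liftable surface $Y$, while the normal surface $X$ contributes only through the harmless sheaf $\QQ$, so the troublesome higher direct image never enters the relevant $H^1$.
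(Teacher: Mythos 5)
Your proposal is correct, and its skeleton coincides with the paper's: prove $H^1(Y,K_Y+\ulcorner f^*H\urcorner)=0$ on a smooth resolution $Y$ by invoking the results of \cite{xie10} (available because $Y$ is birational to the strongly liftable surface $Z$), then descend to $X$ through an inclusion $f_*\OO_Y(K_Y+\ulcorner f^*H\urcorner)\hookrightarrow\OO_X(K_X+\ulcorner H\urcorner)$ whose cokernel is a skyscraper; this descent step is exactly the paper's Lemma \ref{3.6}. The differences in execution are worth recording, and they cut both ways. In your favor, your descent is leaner: the five-term Leray injection $H^1(X,f_*\OO_Y(G))\hookrightarrow H^1(Y,\OO_Y(G))$ (writing $G=K_Y+\ulcorner f^*H\urcorner$) makes the relative vanishing $R^1f_*\OO_Y(G)=0$ superfluous, whereas the paper proves it via Lemma \ref{3.5} (Koll\'ar--Kov\'acs). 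Against you, the whole first half of your argument is redundant: since you ultimately cite \cite[Theorem 1.4]{xie10}, which only requires the surface to be \emph{birational} to a strongly liftable smooth projective surface (its singularity hypothesis being vacuous for smooth $Y$), any resolution of $X$ would do, as it is automatically birational to $Z$; so the common resolution of the graph, the factorization of $h$ into point blow-ups, the appeal to the blow-up stability of strong liftability (\cite[Proposition 2.6]{xie10}), and even the log-resolution step can all be dropped --- this is precisely how the paper proceeds, taking a bare resolution and then reducing nef-and-big to ample by Kodaira's lemma before citing \cite[Theorem 4.2]{xie10}. Finally, one genuine inaccuracy: your parenthetical claim that the vanishing on $Y$ is ``equivalently'' the $(i,j)=(1,0)$ case of the logarithmic vanishing plus Serre duality is false as stated, because Theorem \ref{2.8} requires an \emph{ample} $\Q$-divisor, while the Mumford pullback $f^*H$ has zero intersection with every $f$-exceptional curve and hence is never ample once $f$ contracts something; bridging nef-and-big to ample is real work (the Kodaira's lemma perturbation the paper performs explicitly), so keep the citation and delete the parenthetical. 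A last harmless imprecision: the cokernel $\QQ$ is supported on the finite set over which $f^{-1}$ fails to be an isomorphism, which may include smooth points blown up for your log resolution, not only $\Sing(X)$; since it is a skyscraper either way, nothing changes.
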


As a corollary, we obtain the Kawamata-Viehweg vanishing theorem for 
rational surfaces, which is a generalization of \cite[Theorem 1.4]{xie09}.

\begin{cor}\label{1.4}
Let $X$ be a normal projective rational surface and $H$ a nef and big 
$\Q$-divisor on $X$. Then $H^1(X,K_X+\ulcorner H\urcorner)=0$ holds.
\end{cor}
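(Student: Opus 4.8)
The plan is to deduce this statement directly from Theorem \ref{1.3} by exhibiting a strongly liftable smooth projective surface birational to $X$. The only geometric input needed is that the projective plane itself qualifies as such a surface, so the corollary is an immediate combination of Theorems \ref{1.1} and \ref{1.3}.

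First I would recall that, by definition, a rational variety is one birationally equivalent to projective space. Hence, since $X$ is a normal projective rational surface, there is a birational map $X\dra\PP^2$, and we may take $Z=\PP^2$ as the candidate target surface in Theorem \ref{1.3}.

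Second, I would observe that $\PP^2$ is a smooth projective toric variety: it is associated to the complete fan in $\R^2$ generated by the rays through $(1,0)$, $(0,1)$ and $(-1,-1)$, all of whose maximal cones are smooth. By Theorem \ref{1.1}, every smooth toric variety is strongly liftable over $W_2(k)$; in particular $\PP^2$ is a strongly liftable smooth projective surface.

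Finally, I would apply Theorem \ref{1.3} to $X$ with the given nef and big $\Q$-divisor $H$ and the strongly liftable smooth projective surface $Z=\PP^2$, which yields $H^1(X,K_X+\ulcorner H\urcorner)=0$ at once. Since all the substantive work has already been carried out in the proofs of Theorems \ref{1.1} and \ref{1.3}, there is no real obstacle here; the one point worth checking is simply that $\PP^2$ satisfies the hypotheses of Theorem \ref{1.1}, which is immediate from its description as a smooth complete toric variety.
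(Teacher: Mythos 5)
Your proposal is correct and follows essentially the same route as the paper: the paper's proof likewise consists of exhibiting a strongly liftable smooth projective surface birational to $X$ and invoking Theorem \ref{1.3} (= Theorem \ref{3.7}). The only cosmetic difference is the source of strong liftability --- you take $Z=\PP^2$ and appeal to Theorem \ref{1.1} on toric varieties, while the paper cites \cite[Theorem 1.3]{xie10}, by which $\PP^n_k$ and smooth projective rational surfaces are strongly liftable; both justifications are valid.
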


Finally, we deduce an explicit statement of the cyclic cover trick over 
$W_2(k)$ (see Theorem \ref{4.1} for more details). By means of the cyclic 
cover trick, we can construct a large class of liftable smooth projective 
varieties from certain strongly liftable varieties with simple structures, 
e.g.\ toric varieties. Namely, we have the following corollary.

\begin{cor}\label{1.5}
Let $X$ be a smooth projective toric variety, and $\LL$ an invertible sheaf 
on $X$. Let $N$ be a positive integer prime to $p$, and $D$ an effective 
divisor on $X$ with $\LL^N=\OO_X(D)$ and $\Sing(D_{\rm red})=\emptyset$. 
Let $\pi:Y\ra X$ be the cyclic cover obtained by taking the $N$-th root 
out of $D$. Then $Y$ is a liftable smooth projective scheme.
\end{cor}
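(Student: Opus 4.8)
The plan is to prove this as an application of the cyclic cover trick over $W_2(k)$ (Theorem \ref{4.1}): I will lift the entire cyclic-cover datum — the base $X$, the sheaf $\LL$, the branch divisor $D$, and the section of $\LL^N$ cutting out $D$ — compatibly over $W_2(k)$, and then form the cyclic cover upstairs. Since $X$ is a smooth projective toric variety, Theorem \ref{1.1} supplies a lift $\wt X$ of $X$ over $W_2(k)$ together with a lift of every prime divisor on $X$; this is the input that drives everything. Writing $D=\sum_j a_j D_j$ with the $D_j$ prime, strong liftability then gives an effective lift $\wt D_j\subset\wt X$ of each $D_j$, and I put $\wt D=\sum_j a_j\wt D_j$, an effective relative Cartier divisor on $\wt X/W_2(k)$ reducing to $D$. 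It matters that I lift the \emph{divisor} and not merely the section: lifting $s\in H^0(X,\LL^N)$ directly is a priori obstructed by $H^1(X,\LL^N)$, whereas the tautological section of $\OO_{\wt X}(\wt D)$ automatically reduces to $s$ up to a unit. This is precisely why strong liftability, rather than mere liftability of $X$, is the correct hypothesis.

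Next I would lift $\LL$ and match the $N$-th power relation. On a smooth complete toric variety one has $H^1(X,\OO_X)=H^2(X,\OO_X)=0$, so the truncated exponential sequence $0\ra\OO_X\ra\OO_{\wt X}^*\ra\OO_X^*\ra 0$ shows that reduction induces an isomorphism $\Pic(\wt X)\xrightarrow{\ \sim\ }\Pic(X)$. Hence $\LL$ lifts to a unique invertible sheaf $\wt\LL$ on $\wt X$; moreover $\wt\LL^{\,N}$ and $\OO_{\wt X}(\wt D)$ both reduce to $\LL^N=\OO_X(D)$, so by injectivity of reduction on $\Pic$ they are isomorphic, $\wt\LL^{\,N}\cong\OO_{\wt X}(\wt D)$. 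Transporting the tautological section of $\OO_{\wt X}(\wt D)$ through this isomorphism yields $\wt s\in H^0(\wt X,\wt\LL^{\,N})$ with $\divisor(\wt s)=\wt D$ and $\wt s$ reducing to $s$.

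Finally I would assemble the cover. The section $\wt s$ endows $\bigoplus_{i=0}^{N-1}\wt\LL^{-i}$ with the structure of a locally free $\OO_{\wt X}$-algebra lifting the cyclic-cover algebra on $X$, and I set $\wt Y=\Spec_{\wt X}\bigl(\bigoplus_{i=0}^{N-1}\wt\LL^{-i}\bigr)$. Then $\wt Y$ is finite and flat over $\wt X$, hence flat over $W_2(k)$, and its closed fiber is exactly $Y$. Since $p\nmid N$ and $\Sing(D_{\rm red})=\emptyset$, the cover $Y$ is smooth, so $\wt Y$ — being flat over the Artinian base $W_2(k)$ with smooth special fiber — is smooth over $W_2(k)$. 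Thus $\wt Y$ is a lift of $Y$, which proves that $Y$ is a liftable smooth projective scheme.

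The step I expect to be the crux is the on-the-nose matching $\wt\LL^{\,N}\cong\OO_{\wt X}(\wt D)$: it is not enough to lift $\LL$ and $D$ separately, one needs their lifts to satisfy the $N$-th power relation exactly, so that the cyclic algebra itself — and not just its graded pieces — lifts. This is exactly what the toric vanishing $H^2(X,\OO_X)=0$ secures, by forcing $\Pic(\wt X)$ and $\Pic(X)$ to agree; on a general liftable variety this matching can fail, so the toric hypothesis, entering through Theorem \ref{1.1}, is doing the essential work.
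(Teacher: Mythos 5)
Your proposal is correct in substance, but it reaches the compatible lifting $(\wt{X},\wt{\LL},\wt{D})$ by a genuinely different route than the paper. The paper's proof (Corollary \ref{4.4}) stays inside toric geometry: it takes the explicit lift $\wt{X}=X(\Delta,W_2(k))$, lifts $\LL$ to a torus-invariant $\wt{\LL}$, and uses the lattice-point description of sections (as in the proof of Theorem \ref{3.2}) to see that $r:H^0(\wt{X},\wt{\LL}^N)\ra H^0(X,\LL^N)$ is surjective; the lift $\wt{D}$ is then $\divisor_0(\wt{s})$ for a lift $\wt{s}$ of the section cutting out $D$, so the relation $\wt{\LL}^N=\OO_{\wt{X}}(\wt{D})$ holds by construction and no cohomological hypothesis on $\LL^N$ is ever needed. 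You go the other way around: you lift $D$ componentwise using strong liftability (Theorem \ref{1.1}, together with the Cartier-ness argument implicit in Lemma \ref{2.6}), lift $\LL$ via the truncated exponential sequence, and then force the $N$-th power relation $\wt{\LL}^N\cong\OO_{\wt{X}}(\wt{D})$ using \emph{injectivity} of $\Pic(\wt{X})\ra\Pic(X)$, which comes from $H^1(X,\OO_X)=0$. Both routes then feed the data into Theorem \ref{4.1}(ii). What your route buys is generality: it bypasses the hypothesis $H^1(X,\LL^N)=0$ of Corollary \ref{4.3} at the price of $H^1(X,\OO_X)=0$ only, so it in fact proves the conclusion for \emph{any} strongly liftable smooth projective variety with $H^1(X,\OO_X)=0$; the toric hypothesis enters only through Theorem \ref{1.1} and the vanishing of $H^i(X,\OO_X)$ for $i=1,2$. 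The paper's route is more self-contained for toric $X$, everything being explicit in terms of polytopes.

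One slip in your final assembly: you set $\wt{Y}=\Spec\bigl(\bigoplus_{i=0}^{N-1}\wt{\LL}^{-i}\bigr)$, but the hypothesis is only $\Sing(D_{\rm red})=\emptyset$, so $D$ may be non-reduced, and in that case the cyclic cover $Y$ of the statement is $\Spec$ of the normalized algebra $\cA=\bigoplus_{i=0}^{N-1}\LL^{-i}\bigl(\big[\frac{iD}{N}\big]\bigr)$ from Theorem \ref{4.1}(i), not of the plain algebra $\bigoplus_{i=0}^{N-1}\LL^{-i}$. For non-reduced $D$ (e.g.\ $N=2$, $D=2D_1$ with $D_1$ smooth) the plain algebra yields a non-normal, hence non-smooth, scheme, so your $\wt{Y}$ would be a flat lift of the wrong scheme and its closed fibre would not be $Y$. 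This is harmless to the overall argument because you have already verified exactly the hypotheses of Theorem \ref{4.1}(ii) --- a lifting $\wt{X}$ of $X$, a lifting $\wt{\LL}$ of $\LL$, and a lifting $\wt{D}$ of $D$ with $\wt{\LL}^N=\OO_{\wt{X}}(\wt{D})$ --- so the correct ending is simply to cite that theorem (as your opening plan announces), letting it supply the twisted algebra $\wt{\cA}=\bigoplus_{i=0}^{N-1}\wt{\LL}^{-i}\bigl(\big[\frac{i\wt{D}}{N}\big]\bigr)$; your hand construction is fine verbatim only when $D$ is reduced.
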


Especially, it follows from Corollary \ref{1.5} that there do exist many 
liftable smooth projective varieties of general type, whose existence is 
helpful for studying birational geometry of algebraic varieties in positive 
characteristic. In general, the schemes obtained by taking cyclic covers 
over strongly liftable schemes are no longer strongly liftable (see Remark 
\ref{4.6} for more details), which shows that the class of strongly liftable 
schemes is really restrictive.

In \S \ref{S2}, we will recall some definitions and preliminary results 
of liftings over $W_2(k)$. \S \ref{S3} is devoted to the proofs of the main 
theorems. The cyclic cover trick over $W_2(k)$ will be treated in \S \ref{S4}. 
In \S \ref{S5}, we will give some corrections to the mistakes in \cite{xie10}. 
For the necessary notions and results in birational geometry, 
we refer the reader to \cite{kmm} and \cite{km}.

\begin{nota}
We use $\sim$ to denote linear equivalence, 
$\equiv$ to denote numerical equivalence, 
and $[B]=\sum [b_i] B_i$ (resp.\ 
$\ulcorner B\urcorner=\sum \ulcorner b_i\urcorner B_i$, 
$\langle B\rangle=\sum \langle b_i\rangle B_i$, 
$\{B\}=\sum\{b_i\}B_i$) to denote the round-down (resp.\ 
round-up, fractional part, upper fractional part) 
of a $\Q$-divisor $B=\sum b_iB_i$, where for a real number $b$, 
$[b]:=\max\{ n\in\Z \,|\,n\leq b \}$, $\ulcorner b\urcorner:=-[-b]$, 
$\langle b\rangle:=b-[b]$ and $\{ b\}:=\ulcorner b\urcorner-b$. 
We use $\Sing(D_{\rm red})$ to denote the singular locus of the 
reduced part of a divisor $D$.
\end{nota}

\begin{ack}
I would like to express my gratitude to Professor Luc Illusie for 
pointing out some errors in \cite{xie10} and an earlier version of 
this paper. I would also like to thank Professors Osamu Fujino and 
Fumio Sakai for useful comments.
\end{ack}

\section{Preliminaries}\label{S2}

\begin{defn}\label{2.1}
Let $W_2(k)$ be the ring of Witt vectors of length two of $k$. 
Then $W_2(k)$ is flat over $\Z/p^2\Z$, and $W_2(k)\otimes_{\Z/p^2\Z}\F_p=k$. 
For the explicit construction and further properties of $W_2(k)$, 
we refer the reader to \cite[II.6]{se62}. The following definition 
\cite[Definition 8.11]{ev} generalizes the definition \cite[1.6]{di} 
of liftings of $k$-schemes over $W_2(k)$.

Let $X$ be a noetherian scheme over $k$, and $D=\sum D_i$ a reduced Cartier 
divisor on $X$. A lifting of $(X,D)$ over $W_2(k)$ consists of a scheme 
$\wt{X}$ and closed subschemes $\wt{D}_i\subset\wt{X}$, all defined and 
flat over $W_2(k)$ such that $X=\wt{X}\times_{\spec W_2(k)}\spec k$ and 
$D_i=\wt{D}_i\times_{\spec W_2(k)}\spec k$. We write 
$\wt{D}=\sum \wt{D}_i$ and say that $(\wt{X},\wt{D})$ is a lifting 
of $(X,D)$ over $W_2(k)$, if no confusion is likely.

Let $\LL$ be an invertible sheaf on $X$. A lifting of $(X,\LL)$ consists 
of a lifting $\wt{X}$ of $X$ over $W_2(k)$ and an invertible sheaf $\wt{\LL}$ 
on $\wt{X}$ such that $\wt{\LL}|_X=\LL$. For simplicity, we say that 
$\wt{\LL}$ is a lifting of $\LL$ on $\wt{X}$, if no confusion is likely.
\end{defn}

Let $\wt{X}$ be a lifting of $X$ over $W_2(k)$. Then $\OO_{\wt{X}}$ is flat 
over $W_2(k)$, hence flat over $\Z/p^2\Z$. Note that there are an exact 
sequence of $\Z/p^2\Z$-modules:
\[
0\ra p\cdot\Z/p^2\Z\ra \Z/p^2\Z\stackrel{r}{\ra} \Z/p\Z\ra 0
\]
and a $\Z/p^2\Z$-module isomorphism $p:\Z/p\Z\ra p\cdot\Z/p^2\Z$. 
Tensoring the above by $\OO_{\wt{X}}$, we obtain an exact sequence of 
$\OO_{\wt{X}}$-modules:
\begin{eqnarray}
0\ra p\cdot\OO_{\wt{X}}\ra \OO_{\wt{X}}\stackrel{r}{\ra} 
\OO_X\ra 0, \label{es1}
\end{eqnarray}
and an $\OO_{\wt{X}}$-module isomorphism
\begin{eqnarray}
p:\OO_X\ra p\cdot\OO_{\wt{X}}, \label{es2}
\end{eqnarray}
where $r$ is the reduction modulo $p$ satisfying $p(x)=p\wt{x}$, 
$r(\wt{x})=x$ for $x\in\OO_X$, $\wt{x}\in\OO_{\wt{X}}$.

The following lemma has already been proved in \cite[Lemmas 8.13 and 8.14]{ev}.

\begin{lem}\label{2.2}
Let $(\wt{X},\wt{D})$ be a lifting of $(X,D)$ as in Definition \ref{2.1}. 
If $X$ is smooth over $k$ and $D\subset X$ is simple normal crossing, 
then $\wt{X}$ is smooth over $W_2(k)$ and $\wt{D}\subset\wt{X}$ is 
relatively simple normal crossing over $W_2(k)$.
\end{lem}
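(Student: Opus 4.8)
The statement is local on $\wt{X}$, and since $W_2(k)$ is local Artinian with a single point whose residue field is $k$, the unique fibre of $\wt{X}\ra\spec W_2(k)$ is $X$ itself. The plan is to first deduce smoothness of $\wt{X}$ from a fibrewise criterion, and then to produce, near each point, an explicit relative coordinate system adapted to $\wt{D}$.

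For the smoothness of $\wt{X}$, I would argue as follows. First one checks that $\wt{X}$ is locally of finite type over $W_2(k)$: the ideal $p\cdot\OO_{\wt{X}}$ is nilpotent and, via the isomorphism (\ref{es2}), isomorphic to $\OO_X$ as an $\OO_{\wt{X}}$-module, while $\OO_{\wt{X}}/p\cdot\OO_{\wt{X}}=\OO_X$ is of finite type over $k$; since $W_2(k)$ is noetherian, $\wt{X}$ is then of finite type, hence locally of finite presentation, over $W_2(k)$. Granting this, $\wt{X}\ra\spec W_2(k)$ is flat by hypothesis and its only fibre $X$ is smooth, hence geometrically regular, over $k$. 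The standard characterisation of smoothness (a morphism locally of finite presentation is smooth if and only if it is flat with geometrically regular fibres) then gives that $\wt{X}$ is smooth over $W_2(k)$.

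For the relative simple normal crossing property, fix a closed point $x\in X$ lying on exactly the components $D_1,\dots,D_r$ of $D$. Since $D$ is simple normal crossing on the smooth scheme $X$, there is (étale-locally, or after passing to the completion) a regular system of parameters $t_1,\dots,t_d$ of $\OO_{X,x}$ with $D_i=\{t_i=0\}$ for $1\le i\le r$. Each such $D_i$ is an effective Cartier divisor, and since $\wt{D}_i$ is flat over $W_2(k)$ with reduction $D_i$, I would show $\wt{D}_i$ is again an effective Cartier divisor on $\wt{X}$: a local generator $t_i$ of the ideal of $D_i$ lifts to a local section $\wt{t}_i$ whose vanishing cuts out $\wt{D}_i$, the flatness of $\wt{D}_i$ together with the sequence (\ref{es1}) being exactly what forces this lift to generate the full ideal (a local-criterion-of-flatness argument). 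Completing $\wt{t}_1,\dots,\wt{t}_r$ by arbitrary lifts $\wt{t}_{r+1},\dots,\wt{t}_d$ of $t_{r+1},\dots,t_d$, I claim these $d$ functions are relative coordinates: as $\wt{X}$ is smooth of relative dimension $d$, the sheaf $\Omega^1_{\wt{X}/W_2(k)}$ is locally free of rank $d$, and by Nakayama the differentials $d\wt{t}_1,\dots,d\wt{t}_d$ form a basis near $x$ as soon as their reductions $dt_1,\dots,dt_d$ do, which holds because the $t_i$ are coordinates on $X$. Thus $(\wt{t}_i)$ defines an étale $W_2(k)$-morphism from a neighbourhood of $x$ to $\A^d_{W_2(k)}$ under which $\wt{D}$ is the preimage of the first $r$ coordinate hyperplanes, i.e.\ $\wt{D}\subset\wt{X}$ is relatively simple normal crossing over $W_2(k)$.

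The main obstacle is the compatibility between the two requirements on the lifts $\wt{t}_i$: I am not free to choose $\wt{t}_1,\dots,\wt{t}_r$ arbitrarily, since they must cut out the prescribed flat liftings $\wt{D}_i$, yet they must still extend to a relative coordinate system. The smoothness of $\wt{X}$ provides ample room to lift functions, and the Nakayama argument shows that \emph{any} lift of a coordinate basis is again a relative coordinate basis; the genuine content is therefore the verification that flatness of $\wt{D}_i$ guarantees $\wt{D}_i=\{\wt{t}_i=0\}$ for an honest lift $\wt{t}_i$ of $t_i$, rather than a lift correct only modulo the ideal. This is precisely where the exact sequences (\ref{es1}) and (\ref{es2}) do the essential work.
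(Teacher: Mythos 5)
Your proof is correct, but it reaches both conclusions by a genuinely different route than the paper. For smoothness, the paper does not invoke the fibrewise criterion you use (flat $+$ locally of finite presentation $+$ smooth fibres over the one-point base $\spec W_2(k)$); instead it lifts an \'etale chart $\varphi:X\ra\A^n_k$ to $\wt{\varphi}:\wt{X}\ra\A^n_{W_2(k)}$ and checks by hand, via the sequences (\ref{es1})--(\ref{es3}), that $\wt{\varphi}$ is flat (a free-basis lifting argument) and that $\wt{\varphi}^*\Omega^1_{\A^n_{W_2(k)}/W_2(k)}\ra\Omega^1_{\wt{X}/W_2(k)}$ is surjective, so that $\wt{\varphi}$ is \'etale. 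On the relative SNC statement the two arguments share their essential core: both show, from flatness of $\OO_{\wt{D}_i}$ and the resulting exact sequence $0\ra p\cdot\II_{\wt{D}_i}\ra \II_{\wt{D}_i}\ra\II_{D_i}\ra 0$, that a lift $\wt{t}_i\in\II_{\wt{D}_i}$ of a local equation of $D_i$ generates the whole ideal; this computation is not optional in either treatment, and you correctly identify it as the genuine content. Where you differ is in assembling these generators into relative coordinates: you complete them by arbitrary lifts of the remaining parameters and invoke Nakayama together with the differential criterion for \'etaleness between smooth $W_2(k)$-schemes, whereas the paper works with the specific coordinates $\wt{x}_i$ lifted from the chart of $X$ and verifies everything directly. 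Your route buys brevity and a clean separation of the two constraints on the lifts (cutting out $\wt{D}_i$ versus forming a coordinate system); in fact your closing observation that \emph{any} lift of a coordinate basis is again a coordinate basis resolves explicitly a point the paper treats casually, since in its final paragraph the lifts $\wt{x}_i$ for $i\le r$ must be taken inside $\II_{\wt{D}_i}$ (rather than as the arbitrary lifts fixed at the outset) for the displayed computation to be meaningful. The paper's route buys self-containedness, using nothing beyond the exact sequences and Hartshorne-level facts, at the cost of redoing by hand what the general criteria give for free. Note finally that your finite-presentation step is a real ingredient rather than a pedantic one: Definition \ref{2.1} does not require $\wt{X}$ to be of finite type over $W_2(k)$, and some such argument (nilpotence of $p\cdot\OO_{\wt{X}}$ plus finite-typeness of the reduction, so that a lifted generating set generates because $p^2=0$) is needed before either your smoothness criterion or the paper's citation of the \'etale criterion strictly applies.
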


\begin{proof}
Since the statement is local, we may assume that there is an \'etale 
morphism $\varphi:X\ra\A^n_k=\spec k[t_1,\cdots,t_n]$, such that 
$\varphi^*(t_i)$ give a regular system of parameters $(x_1,\cdots,x_n)$ on 
$X$, and $D=\sum D_i\subset X$ is defined by the equation $x_1\cdots x_r=0$ 
for some $r\leq n$. Take $\wt{x}_i\in\OO_{\wt{X}}$ with $r(\wt{x}_i)=x_i$ 
($1\leq i\leq n$), and define
\[
\wt{\varphi}^*:W_2(k)[t_1,\cdots,t_n]\ra \OO_{\wt{X}}
\]
as a $W_2(k)$-algebra homomorphism by $\wt{\varphi}^*(t_i)=\wt{x}_i$ 
($1\leq i\leq n$), which gives rise to a morphism $\wt{\varphi}:\wt{X}
\ra\A^n_{W_2(k)}$. First of all, we shall prove that $\wt{\varphi}$ is 
an \'etale morphism, which implies that $\wt{X}$ is smooth over $W_2(k)$.

Without loss of generality, we may assume that $\OO_X$ is a free 
$\OO_{\A^n_k}$-module with generators $g_1,\cdots,g_m$. Take $\wt{g}_i
\in\OO_{\wt{X}}$ lifting $g_i$ ($1\leq i\leq m$), then for any $\wt{x}
\in\OO_{\wt{X}}$, we can write $x=r(\wt{x})=\sum^m_{i=1}\la_ig_i$ 
for some $\la_i\in\OO_{\A^n_k}$. Take $\wt{\la}_i\in\OO_{\A^n_{W_2(k)}}$ 
lifting $\la_i$ ($1\leq i\leq m$), then by the exact sequence (\ref{es1}) 
we have $\wt{x}-\sum^m_{i=1}\wt{\la}_i\wt{g}_i\in p\cdot\OO_{\wt{X}}$. 
Thus we can find $\wt{\mu}_i\in\OO_{\A^n_{W_2(k)}}$ ($1\leq i\leq m$), 
such that $\mu_i=r(\wt{\mu}_i)$ and
\[
\wt{x}-\sum^m_{i=1}\wt{\la}_i\wt{g}_i=p(\sum^m_{i=1}\mu_ig_i)
=\sum^m_{i=1}p\wt{\mu}_i\wt{g}_i,
\]
hence we have $\wt{x}=\sum^m_{i=1}(\wt{\la}_i+p\wt{\mu}_i)\wt{g}_i$. 
Assume $\sum^m_{i=1}\wt{\la}_i\wt{g}_i=0$ for some $\wt{\la}_i\in\OO
_{\A^n_{W_2(k)}}$. Then $\la_i=r(\wt{\la}_i)$ satisfy $\sum^m_{i=1}
\la_ig_i=0$, which implies $\la_i=0$ ($1\leq i\leq m$). Thus we can 
find $\wt{\mu}_i\in\OO_{\A^n_{W_2(k)}}$ ($1\leq i\leq m$), such that 
$\mu_i=r(\wt{\mu}_i)$ and $\wt{\la}_i=p(\mu_i)=p\wt{\mu}_i$, hence
\[
p(\sum^m_{i=1}\mu_ig_i)=\sum^m_{i=1}p\wt{\mu}_i\wt{g}_i
=\sum^m_{i=1}\wt{\la}_i\wt{g}_i=0.
\]
Since $p$ is an isomorphism, we have $\sum^m_{i=1}\mu_ig_i=0$, hence 
$\mu_i=0$ and $\wt{\la}_i=0$ ($1\leq i\leq m$). Therefore $\OO_{\wt{X}}$ 
is a free $\OO_{\A^n_{W_2(k)}}$-module and $\wt{\varphi}$ is flat.

Since $\varphi:X\ra\A^n_k$ is \'etale, we have $\Omega^1_{X/k}=\varphi^*
\Omega^1_{\A^n_k/k}=\bigoplus^n_{i=1}\OO_Xdx_i$. By definition, we have 
$\wt{\varphi}^*\Omega^1_{\A^n_{W_2(k)}/W_2(k)}=\bigoplus^n_{i=1}\OO_{\wt{X}}
d\wt{x}_i$. Combining the exact sequence (\ref{es1}) and the isomorphism 
(\ref{es2}), we obtain an exact sequence of $\OO_{\wt{X}}$-modules:
\begin{eqnarray}
0\ra \OO_X\stackrel{p}{\ra} \OO_{\wt{X}}\stackrel{r}{\ra} \OO_X\ra 0. 
\label{es3}
\end{eqnarray}
Tensoring the exact sequence (\ref{es3}) by $\Omega^1_{\wt{X}/W_2(k)}$, 
we obtain an exact sequence:
\[
\Omega^1_{X/k}\stackrel{p}{\ra} \Omega^1_{\wt{X}/W_2(k)}\stackrel{r}{\ra} 
\Omega^1_{X/k}\ra 0.
\]
We shall prove that the natural homomorphism $\wt{\varphi}^*\Omega^1
_{\A^n_{W_2(k)}/W_2(k)}\ra \Omega^1_{\wt{X}/W_2(k)}$ is surjective. 
For any $\wt{\omega}\in\Omega^1_{\wt{X}/W_2(k)}$, we can write 
$\omega=r(\wt{\omega})=\sum^n_{i=1}\la_idx_i\in\Omega^1_{X/k}$ for 
some $\la_i\in\OO_X$. Take $\wt{\la}_i\in\OO_{\wt{X}}$ lifting $\la_i$ 
($1\leq i\leq n$), then $\wt{\omega}-\sum^n_{i=1}\wt{\la}_id\wt{x}_i
\in\Image(\Omega^1_{X/k}\stackrel{p}{\ra} \Omega^1_{\wt{X}/W_2(k)})$. 
Thus we can find $\wt{\mu}_i\in\OO_{\wt{X}}$ ($1\leq i\leq n$) such that
$\mu_i=r(\wt{\mu}_i)$ and
\[
\wt{\omega}-\sum^n_{i=1}\wt{\la}_id\wt{x}_i=p(\sum^n_{i=1}\mu_idx_i)
=\sum^n_{i=1}p\wt{\mu}_id\wt{x}_i,
\]
hence $\wt{\omega}=\sum^n_{i=1}(\wt{\la}_i+p\wt{\mu}_i)d\wt{x}_i$, so 
$\wt{\varphi}^*\Omega^1_{\A^n_{W_2(k)}/W_2(k)}\ra \Omega^1_{\wt{X}/W_2(k)}$ 
is surjective. By the first exact sequence associated to $\wt{\varphi}:
\wt{X}\ra\A^n_{W_2(k)}$ \cite[Proposition II.8.11]{ha77}, we have 
$\Omega^1_{\wt{X}/\A^n_{W_2(k)}}=0$, which together with the flatness 
of $\wt{\varphi}$ implies that $\wt{\varphi}$ is \'etale 
(see \cite[Exercise III.10.3]{ha77}), so $\wt{X}$ is smooth over $W_2(k)$.

By assumption, $D_i$ is defined by the equation $x_i=0$ ($1\leq i\leq r$). 
Since $\wt{D}_i$ is a lifting of $D_i$ over $W_2(k)$, the flatness of 
$\OO_{\wt{X}}$ and $\OO_{\wt{D}_i}$ over $W_2(k)$ implies that the ideal 
sheaf $\II_{\wt{D}_i}$ of $\wt{D}_i$ is flat over $W_2(k)$. Thus we have 
an exact sequence:
\[
0\ra p\cdot\II_{\wt{D}_i}\ra \II_{\wt{D}_i}\stackrel{r}{\ra} \II_{D_i}
\ra 0,
\]
and an isomorphism $p:\II_{D_i}\ra p\cdot\II_{\wt{D}_i}$, where $\II_{D_i}$ 
is the ideal sheaf of $D_i$. For any $\wt{g}\in\II_{\wt{D}_i}$, we can write 
$g=r(\wt{g})=\la_ix_i$ for some $\la_i\in\OO_X$. Take $\wt{\la}_i\in
\OO_{\wt{X}}$ lifting $\la_i$ ($1\leq i\leq r$), then $\wt{g}-\wt{\la}_i
\wt{x}_i\in p\cdot\II_{\wt{D}_i}$. Thus we can find $\wt{\mu}_i\in
\OO_{\wt{X}}$ ($1\leq i\leq r$) such that $\mu_i=r(\wt{\mu}_i)$ and
\[
\wt{g}-\wt{\la}_i\wt{x}_i=p(\mu_ix_i)=p\wt{\mu}_i\wt{x}_i,
\]
hence $\wt{g}=(\wt{\la}_i+p\wt{\mu}_i)\wt{x}_i$, so $\II_{\wt{D}_i}$ is 
generated by $\wt{x}_i$, i.e.\ $\wt{x}_i=0$ is a defining equation for 
$\wt{D}_i$  ($1\leq i\leq r$). Thus $\wt{D}=\sum\wt{D}_i\subset\wt{X}$ 
is relatively simple normal crossing over $W_2(k)$.
\end{proof}

We can use Lemma \ref{2.2} to deduce Bertini's theorem for ample 
invertible sheaves on smooth projective schemes over $W_2(k)$.

\begin{thm}\label{2.3}
Let $\wt{X}$ be a smooth projective scheme over $W_2(k)$, and $\wt{\LL}$ 
an ample invertible sheaf on $\wt{X}$. Then there is a positive integer 
$m$ such that $\wt{\LL}^m$ is very ample and associated to a general 
section $\wt{s}\in H^0(\wt{X},\wt{\LL}^m)$, the divisor of zeros 
$\wt{D}=\divisor_0(\wt{s})$ is smooth over $W_2(k)$.
\end{thm}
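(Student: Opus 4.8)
The plan is to prove Theorem \ref{2.3} by combining the classical Bertini theorem over the special fibre $X = \wt{X}\times_{\spec W_2(k)}\spec k$ with a lifting argument powered by Lemma \ref{2.2}. The idea is that smoothness of a divisor over $W_2(k)$ is governed by the special fibre together with a transversality condition, so I first produce a smooth divisor downstairs and then lift it, checking that the lift remains smooth over the base.

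**First** I would observe that $\wt{\LL}$ is ample on $\wt{X}$, which is projective over the Artinian local ring $W_2(k)$; hence for $m \gg 0$ the sheaf $\wt{\LL}^m$ is very ample and the restriction map $H^0(\wt{X},\wt{\LL}^m)\to H^0(X,\LL^m)$ is surjective, where $\LL = \wt{\LL}|_X$. The surjectivity follows from the long exact cohomology sequence attached to tensoring the exact sequence (\ref{es1}) by $\wt{\LL}^m$, namely
\[
H^0(\wt{X},\wt{\LL}^m)\stackrel{r}{\ra} H^0(X,\LL^m)\ra H^1(\wt{X},p\cdot\wt{\LL}^m),
\]
and the rightmost group vanishes for $m\gg 0$ by Serre vanishing (using the isomorphism (\ref{es2}) to identify $p\cdot\wt{\LL}^m$ with $\LL^m$ on $X$). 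Increasing $m$ further, I may also assume $\LL^m$ is very ample on $X$.

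**Next** I would apply the classical Bertini theorem on the smooth projective $k$-variety $X$: for $m$ as above and a general section $s\in H^0(X,\LL^m)$, the divisor $D=\divisor_0(s)$ is smooth over $k$, and since $X$ is smooth and $D$ is smooth, $D\subset X$ is in particular simple normal crossing (it is a smooth prime divisor). By the surjectivity just established, I lift such a general $s$ to a section $\wt{s}\in H^0(\wt{X},\wt{\LL}^m)$ with $r(\wt{s})=s$, and set $\wt{D}=\divisor_0(\wt{s})$. Because $\wt{X}$ is flat over $W_2(k)$ and $\wt{s}$ reduces to a nonzerodivisor $s$ on $X$, the closed subscheme $\wt{D}$ is flat over $W_2(k)$ with special fibre $D$; thus $(\wt{X},\wt{D})$ is a lifting of $(X,D)$ in the sense of Definition \ref{2.1}. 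Invoking Lemma \ref{2.2}, since $X$ is smooth over $k$ and $D\subset X$ is simple normal crossing, I conclude that $\wt{D}$ is smooth (indeed relatively simple normal crossing) over $W_2(k)$, which is exactly the desired conclusion.

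**The main obstacle** I expect is not the geometric transversality but the bookkeeping required to guarantee that a \emph{general} lift $\wt{s}$ restricts to a \emph{general} section $s$ downstairs, so that Bertini applies to $s$; this is ensured precisely by the surjectivity of $r$ on global sections, which is why the Serre-vanishing step must be secured before choosing $\wt{s}$. A secondary point needing care is verifying that $\wt{D}=\divisor_0(\wt{s})$ is genuinely $W_2(k)$-flat and has special fibre $D$ rather than something nonreduced; this follows because flatness over the Artinian ring $W_2(k)$ can be checked via the local criterion against the ideal $p\cdot\OO_{\wt{X}}$, using that $s$ is a nonzerodivisor on the integral special fibre. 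Once flatness and the identification of the special fibre are in hand, Lemma \ref{2.2} does all the remaining work, so the theorem reduces cleanly to classical Bertini plus the lifting formalism already developed in this section.
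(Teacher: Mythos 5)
Your proposal is correct and follows essentially the same route as the paper's proof: establish surjectivity of $H^0(\wt{X},\wt{\LL}^m)\to H^0(X,\LL^m)$ for $m\gg 0$ via the exact sequence of sheaves and Serre vanishing (your $H^1(\wt{X},p\cdot\wt{\LL}^m)$ is identified with the paper's $H^1(X,\LL^m)$ by the isomorphism (\ref{es2})), apply classical Bertini on the special fibre $X$, lift the general section, and invoke Lemma \ref{2.2} to conclude smoothness of $\wt{D}$ over $W_2(k)$. Your explicit verification that $\wt{D}$ is flat over $W_2(k)$ with special fibre $D$ is a point the paper leaves implicit, but it does not change the argument.
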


\begin{proof}
The natural surjection $W_2(k)\stackrel{r}{\ra} k$ induces the closed 
immersion $\iota:\spec k\hookrightarrow \spec W_2(k)$. Let $X=\wt{X}\times_
{\spec W_2(k)}\spec k$, $\iota:X\hookrightarrow\wt{X}$ the closed immersion, 
and $\LL=\iota^*\wt{\LL}=\wt{\LL}|_X$ the induced invertible sheaf on $X$.

Since $\wt{\LL}$ is ample, there is a positive integer $m$ such that 
$\wt{\LL}^m$ is very ample, hence induces a closed immersion $\wt{\varphi}:
\wt{X}\ra \PP^N_{W_2(k)}$ with $\wt{\varphi}^*\OO_{\PP^N_{W_2(k)}}(1)=
\wt{\LL}^m$. Let $\iota:\PP^N_k\hookrightarrow \PP^N_{W_2(k)}$ be the closed 
immersion induced by $\iota:\spec k\hookrightarrow \spec W_2(k)$. 
Then it is easy to see that $X=\wt{X}\times_{\PP^N_{W_2(k)}}\PP^N_k$. 
Let $\varphi:X\ra \PP^N_k$ be the induced closed immersion.
\[
\xymatrix{
X \ar@{^{(}->}[r]^\varphi \ar@{^{(}->}[d]^\iota & \PP^N_k 
\ar[r] \ar@{^{(}->}[d]^\iota & \spec k \ar@{^{(}->}[d]^\iota \\
\wt{X} \ar@{^{(}->}[r]^{\wt{\varphi}} & \PP^N_{W_2(k)} \ar[r] & 
\spec W_2(k)
}
\]
Then we have $\varphi^*\OO_{\PP^N_k}(1)=\LL^m$, hence $\LL$ is ample. 
Taking $m$ sufficiently large, we may assume that $\LL^m$ is very ample and 
$H^1(X,\LL^m)=0$. Tensoring the exact sequence (\ref{es3}) by $\wt{\LL}^m$, 
we obtain an exact sequence:
\begin{eqnarray*}
0\ra \LL^m\stackrel{p}{\ra} \wt{\LL}^m\stackrel{r}{\ra} \LL^m\ra 0,
\end{eqnarray*}
which implies that $H^0(\wt{X},\wt{\LL}^m)\stackrel{r}{\ra} H^0(X,\LL^m)$ 
is surjective. 

Since $X$ is a smooth projective scheme over $k$, it follows from Bertini's 
theorem \cite[Theorem II.8.18]{ha77} that associated to a general section 
$s\in H^0(X,\LL^m)$, the divisor of zeros $D=\divisor_0(s)$ is smooth over 
$k$. Take a section $\wt{s}\in H^0(\wt{X},\wt{\LL}^m)$ with $r(\wt{s})=s$, 
then the divisor of zeros $\wt{D}=\divisor_0(\wt{s})$ is a lifting of $D$ 
over $W_2(k)$. By Lemma \ref{2.2}, $\wt{D}$ is smooth over $W_2(k)$.
\end{proof}

\begin{defn}\label{2.4}
Let $X$ be a smooth scheme over $k$. $X$ is said to be strongly liftable 
over $W_2(k)$, if there is a lifting $\wt{X}$ of $X$ over $W_2(k)$, such that 
for any prime divisor $D$ on $X$, $(X,D)$ has a lifting $(\wt{X},\wt{D})$ 
over $W_2(k)$ as in Definition \ref{2.1}, where $\wt{X}$ is fixed for all 
liftings $\wt{D}$.
\end{defn}

Let $X$ be a smooth scheme over $k$, $\wt{X}$ a lifting of $X$ over $W_2(k)$, 
$D$ a prime divisor on $X$ and $\LL_D=\OO_X(D)$ the associated invertible 
sheaf on $X$. Then there is an exact sequence of abelian sheaves:
\begin{eqnarray}
0\ra \OO_X\stackrel{q}{\ra} \OO^*_{\wt{X}}\stackrel{r}{\ra}\OO^*_X\ra 1, 
\label{es4}
\end{eqnarray}
where $q(x)=p(x)+1$ for $x\in\OO_X$, $p:\OO_X\ra p\cdot\OO_{\wt{X}}$ is 
the isomorphism (\ref{es2}) and $r$ is the reduction modulo $p$. The exact 
sequence (\ref{es4}) gives rise to an exact sequence of cohomology groups:
\begin{eqnarray}
H^1(\wt{X},\OO^*_{\wt{X}})\stackrel{r}{\ra} H^1(X,\OO^*_X)\ra 
H^2(X,\OO_X). \label{es5}
\end{eqnarray}
If $r:H^1(\wt{X},\OO^*_{\wt{X}})\ra H^1(X,\OO^*_X)$ is surjective, 
then $\LL_D$ has a lifting $\wt{\LL}_D$, i.e.\ $\wt{\LL}_D$ is an 
invertible sheaf on $\wt{X}$ with $\wt{\LL}_D|_X=\LL_D$. Tensoring the 
exact sequence (\ref{es3}) by $\wt{\LL}_D$, we have an exact sequence of 
$\OO_{\wt{X}}$-modules:
\begin{eqnarray*}
0\ra \LL_D\stackrel{p}{\ra} \wt{\LL}_D\stackrel{r}{\ra} \LL_D\ra 0,
\end{eqnarray*}
which gives rise to an exact sequence of cohomology groups:
\begin{eqnarray}
H^0(\wt{X},\wt{\LL}_D)\stackrel{r_D}{\ra} H^0(X,\LL_D)\ra H^1(X,\LL_D). 
\label{es7}
\end{eqnarray}

We recall here a sufficient condition for strong liftability 
\cite[Proposition 3.4]{xie10}.

\begin{prop}\label{2.5}
Let $X$ be a smooth scheme over $k$, and $\wt{X}$ a lifting of $X$ over 
$W_2(k)$. Then $X$ is strongly liftable if the following two conditions hold:
\begin{itemize}
\item[(i)] $r:H^1(\wt{X},\OO_{\wt{X}}^*)\ra H^1(X,\OO_X^*)$ is surjective;

\item[(ii)] For any prime divisor $D$ on $X$, there is a lifting 
$\wt{\LL}_D$ of $\LL_D=\OO_X(D)$ such that $r_D:H^0(\wt{X},\wt{\LL}_D)
\ra H^0(X,\LL_D)$ is surjective.
\end{itemize}
\end{prop}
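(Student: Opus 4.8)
The plan is to show that conditions (i) and (ii) together allow us to lift an arbitrary prime divisor $D$ along with the fixed scheme $\wt{X}$. The key observation is that strong liftability, as defined in Definition \ref{2.4}, requires a single lifting $\wt{X}$ that works simultaneously for all prime divisors. Condition (i) fixes such an $\wt{X}$ with the property that every invertible sheaf on $X$ lifts to $\wt{X}$; condition (ii) then guarantees that we can lift not merely the sheaf $\LL_D=\OO_X(D)$, but an actual section cutting out $D$. So the proof is essentially a matter of assembling the exact sequences (\ref{es4})--(\ref{es7}) that precede the statement.

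First I would use condition (i) and the cohomology sequence (\ref{es5}). Surjectivity of $r:H^1(\wt{X},\OO^*_{\wt{X}})\ra H^1(X,\OO^*_X)$ means that the class of $\LL_D$ in $\Pic(X)=H^1(X,\OO^*_X)$ lifts to a class in $\Pic(\wt{X})=H^1(\wt{X},\OO^*_{\wt{X}})$; that is, there exists an invertible sheaf $\wt{\LL}_D$ on $\wt{X}$ with $\wt{\LL}_D|_X=\LL_D$. Crucially, this lifted sheaf lives on the \emph{same} $\wt{X}$ for every $D$, which is exactly what Definition \ref{2.4} demands. At this stage we have a candidate lifting $\wt{\LL}_D$ of the line bundle, but not yet a subscheme lifting $D$ itself.

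The next step turns the lifted line bundle into a lifted divisor, and this is where condition (ii) enters. The prime divisor $D$ corresponds to a section $s_D\in H^0(X,\LL_D)$ whose divisor of zeros is $D$. By condition (ii), the reduction map $r_D:H^0(\wt{X},\wt{\LL}_D)\ra H^0(X,\LL_D)$ coming from the sequence (\ref{es7}) is surjective, so I can choose a section $\wt{s}_D\in H^0(\wt{X},\wt{\LL}_D)$ with $r_D(\wt{s}_D)=s_D$. I would then set $\wt{D}=\divisor_0(\wt{s}_D)$, the closed subscheme of $\wt{X}$ defined by the vanishing of $\wt{s}_D$. By construction $\wt{D}\times_{\spec W_2(k)}\spec k=\divisor_0(s_D)=D$. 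The flatness of $\wt{D}$ over $W_2(k)$ should follow from the local criterion for flatness: since $\wt{s}_D$ reduces to a section $s_D$ that is a nonzerodivisor cutting out the Cartier divisor $D$, and since $\OO_{\wt{X}}$ is flat over $W_2(k)$, the section $\wt{s}_D$ is itself a nonzerodivisor locally, so the quotient $\OO_{\wt{D}}$ is $W_2(k)$-flat. This exhibits $(\wt{X},\wt{D})$ as a lifting of $(X,D)$ in the sense of Definition \ref{2.1}.

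The main obstacle I anticipate is the flatness verification for $\wt{D}$, together with confirming that a \emph{single} $\wt{X}$ genuinely serves all prime divisors at once. The flatness point is the technical heart: one must argue that lifting the equation of a Cartier divisor through the square-zero extension $W_2(k)\ra k$ preserves the nonzerodivisor property, so that $\wt{D}$ is again an effective Cartier divisor flat over the base. This is a local computation very much in the spirit of the argument already carried out for $\wt{D}_i$ at the end of the proof of Lemma \ref{2.2}, where it was shown that the lifted section generates the ideal sheaf. Apart from this, the logic is a direct chase through (\ref{es4})--(\ref{es7}), and the only subtlety is bookkeeping: condition (i) must be invoked once to pin down $\wt{X}$, after which conditions (i) and (ii) are applied uniformly as $D$ ranges over all prime divisors on $X$.
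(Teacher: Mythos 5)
Your argument is, in substance, the standard one --- note that this paper itself gives no proof of Proposition \ref{2.5}; it is recalled from \cite[Proposition 3.4]{xie10}. Your core steps are exactly what is needed: take the section $s_D\in H^0(X,\LL_D)$ with $\divisor_0(s_D)=D$, lift it through the surjection $r_D$ to $\wt{s}_D\in H^0(\wt{X},\wt{\LL}_D)$, set $\wt{D}=\divisor_0(\wt{s}_D)$, and verify flatness. Your flatness sketch also completes correctly: since $s_D$ is locally a nonzerodivisor and $\OO_{\wt{X}}$ is $W_2(k)$-flat, $\wt{s}_D$ is locally a nonzerodivisor (if $\wt{a}\wt{s}_D=0$, reducing mod $p$ gives $a s_D=0$, so $a=0$, so $\wt{a}=p(b)$ and $0=p(b)\wt{s}_D=p(bs_D)$ forces $b=0$); then tensoring $0\ra\OO_{\wt{X}}\stackrel{\wt{s}_D}{\ra}\OO_{\wt{X}}\ra\OO_{\wt{D}}\ra 0$ with $k$ over $W_2(k)$ yields $\tor_1^{W_2(k)}(\OO_{\wt{D}},k)\cong\kernel(s_D:\OO_X\ra\OO_X)=0$, so $\OO_{\wt{D}}$ is $W_2(k)$-flat and $\wt{D}\times_{\spec W_2(k)}\spec k=D$, as required by Definition \ref{2.1}.

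One logical wrinkle should be fixed. You first invoke condition (i) to produce a lifting $\wt{\LL}_D$ of $\LL_D$, and then apply condition (ii) to \emph{that} lifting. But (ii) only asserts that \emph{some} lifting of $\LL_D$ has surjective $r_D$; liftings of an invertible sheaf over $W_2(k)$ are not unique, and surjectivity of $r_D$ genuinely depends on the choice. This is precisely the content of Remark \ref{2.7}: there $\OO_C$ admits a nontrivial lifting $\wt{\LL}$ for which the section $1\in H^0(C,\OO_C)$ does not lift, even though the trivial lifting $\OO_{\wt{C}}$ has surjective $r$. The repair is immediate --- discard the lifting produced by (i) and work throughout with the lifting furnished by (ii). Once you do this, condition (i) plays no role whatsoever in the sufficiency argument; it is a vestige of the ``if and only if'' formulation of \cite[Proposition 3.4]{xie10}, whose necessity direction is retracted in \S\ref{S5} of this paper.
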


\begin{lem}\label{2.6}
Let $X$ be a strongly liftable smooth scheme, and $\wt{X}$ a lifting 
of $X$ as in Definition \ref{2.4}. Then the natural map 
$r:H^1(\wt{X},\OO_{\wt{X}}^*)\ra H^1(X,\OO_X^*)$ is surjective.
\end{lem}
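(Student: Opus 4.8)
The plan is to reinterpret the statement in terms of Picard groups. Since $H^1(X,\OO_X^*)=\Pic(X)$ and $H^1(\wt X,\OO_{\wt X}^*)=\Pic(\wt X)$, and the map $r$ induced by the sheaf homomorphism in (\ref{es4}) is nothing but the restriction $\wt\LL\mapsto\wt\LL|_X$ along the closed immersion $X\hookrightarrow\wt X$, the assertion is exactly that every invertible sheaf on $X$ extends to one on $\wt X$. Because $X$ is smooth, hence locally factorial, $\Pic(X)$ is generated by the classes $[\OO_X(D)]$ of prime divisors $D$; moreover tensor products and inverses of lifts are again lifts, so it suffices to produce, for each prime divisor $D$, an invertible sheaf on the \emph{fixed} lifting $\wt X$ restricting to $\OO_X(D)$.

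So I would fix a prime divisor $D$. By strong liftability (Definition \ref{2.4}) there is a closed subscheme $\wt D\subset\wt X$, flat over $W_2(k)$ and lifting $D$, with $\wt X$ the same for all $D$. The heart of the argument is to upgrade this flat scheme-theoretic lift to a genuine Cartier divisor, i.e.\ to show that the ideal sheaf $\II_{\wt D}$ is invertible, whence $\OO_{\wt X}(\wt D):=\II_{\wt D}^{-1}$ makes sense. I would run precisely the ideal-sheaf computation from the proof of Lemma \ref{2.2}: flatness of $\OO_{\wt D}$ over $W_2(k)$ yields an exact sequence $0\ra p\cdot\II_{\wt D}\ra\II_{\wt D}\stackrel{r}{\ra}\II_D\ra 0$ together with the isomorphism $p\colon\II_D\ra p\cdot\II_{\wt D}$. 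Since $X$ is smooth, $D$ is Cartier, so locally $\II_D=(f)$ with $f$ a nonzerodivisor; lifting $f$ to $\wt f\in\OO_{\wt X}$ and using the displayed sequence exactly as in Lemma \ref{2.2} shows $\II_{\wt D}=(\wt f)$.

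The step I expect to be the real obstacle is checking that $\wt f$ is a \emph{nonzerodivisor}, which is what makes $\II_{\wt D}$ invertible rather than merely cyclic. Here I would feed a relation $\wt f\cdot\wt a=0$ through the exact sequence (\ref{es3}): reducing modulo $p$ gives $f\cdot r(\wt a)=0$, so $r(\wt a)=0$ and $\wt a=p(b)$ lies in the image of $p\colon\OO_X\ra\OO_{\wt X}$; then $\wt f\cdot p(b)=p(fb)$, and injectivity of the isomorphism $p$ forces $fb=0$, hence $b=0$ and $\wt a=0$. With $\II_{\wt D}$ now invertible, flat base change gives $\II_{\wt D}\otimes_{\OO_{\wt X}}\OO_X=\II_D$, so $\OO_{\wt X}(\wt D)|_X=\OO_X(D)$, i.e.\ $r([\OO_{\wt X}(\wt D)])=[\OO_X(D)]$. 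Since the prime-divisor classes generate $\Pic(X)$ and each of them now lies in the image of $r$, the map $r$ is surjective; this also re-establishes condition (i) of Proposition \ref{2.5} for the fixed $\wt X$.
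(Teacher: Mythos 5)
Your proof is correct and follows essentially the same route as the paper's: use strong liftability to lift each prime divisor occurring in a divisor representing a given invertible sheaf, note that the lifted flat subschemes are in fact Cartier divisors on the fixed $\wt{X}$, and conclude surjectivity because every class in $\Pic(X)$ is a $\Z$-combination of prime divisor classes (the paper phrases this as $\LL=\OO_X(E_1-E_2)$ with $E_1,E_2$ effective). The only difference is one of detail: you explicitly verify that $\II_{\wt{D}}$ is invertible (including the nonzerodivisor check via the sequence (\ref{es3})), whereas the paper simply asserts that the lifts are effective Cartier divisors, implicitly relying on the ideal-sheaf computation already carried out in the proof of Lemma \ref{2.2}; your write-up thus fills in a step the paper leaves tacit.
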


\begin{proof}
Let $D=\sum a_iD_i$ be an effective divisor on $X$. Then we can 
lift each distinct prime divisor $D_i$ to $\wt{D}_i$ on $\wt{X}$, 
hence $D$ has a lifting $\wt{D}=\sum a_i\wt{D}_i$, which is an effective 
Cartier divisor on $\wt{X}$. Let $\LL$ be an invertible sheaf on $X$. 
Assume $\LL=\OO_X(E_1-E_2)$, where $E_1$ and $E_2$ are effective 
divisors on $X$ without common irreducible components, then we can 
lift $E_i$ to $\wt{E}_i$ on $\wt{X}$. Define $\wt{\LL}=\OO_{\wt{X}}
(\wt{E}_1-\wt{E}_2)$. Then $\wt{\LL}$ is a lifting of $\LL$ on $\wt{X}$. 
Thus $r:H^1(\wt{X},\OO_{\wt{X}}^*)\ra H^1(X,\OO_X^*)$ is surjective.
\end{proof}

\begin{rem}\label{2.7}
Let $X$ be a strongly liftable smooth scheme, $\wt{X}$ a lifting of $X$ 
as in Definition \ref{2.4}, $\LL$ an invertible sheaf on $X$, and $\wt{\LL}$ 
a lifting of $\LL$ on $\wt{X}$. Then in general the natural map 
$r:H^0(\wt{X},\wt{\LL})\ra H^0(X,\LL)$ is not necessarily surjective. 
The following example is given by Illusie and Kato. Let $C$ be a smooth 
projective curve of genus $g\geq 1$. Then $C$ is strongly liftable by 
\cite[Theorem 1.3(i)]{xie10}. Let $\wt{C}$ be a lifting of $C$. 
By the exact sequence (\ref{es4}), we have an exact sequence 
of cohomology groups:
\begin{eqnarray*}
0\ra H^1(C,\OO_C)\ra H^1(\wt{C},\OO^*_{\wt{C}})\stackrel{r}{\ra} 
H^1(C,\OO^*_C).
\end{eqnarray*}
Since $\dim H^1(C,\OO_C)=g\geq 1$, there exists a non-trivial invertible 
sheaf $\wt{\LL}$ on $\wt{C}$ such that $\LL=\wt{\LL}|_C\cong\OO_C$. Then 
the section $s\in H^0(C,\LL)$ giving an isomorphism $\OO_C\ra\LL$ cannot 
be lifted to a section of $\wt{\LL}$. Thus $r:H^0(\wt{C},\wt{\LL})\ra H^0
(C,\LL)$ is not surjective.
\end{rem}

For convenience of the reader, we recall the Kawamata-Viehweg vanishing 
theorem for smooth projective varieties in positive characteristic under 
the lifting condition over $W_2(k)$ of certain log pairs, which has first 
been proved by Hara \cite{ha98}.

\begin{thm}[Kawamata-Viehweg vanishing in char.\ $p>0$]\label{2.8}
Let $X$ be a smooth projective variety of dimension $d$, 
$H$ an ample $\Q$-divisor on $X$, and $D$ a simple normal crossing 
divisor containing $\Supp(\langle H\rangle)$. 
Assume that $(X,D)$ admits a lifting over $W_2(k)$. Then 
\[ H^i(X,\Omega_X^j(\log D)(-\ulcorner H\urcorner))=0 \,\,\,\,
\hbox{holds for any} \,\,\,\, i+j<\inf(d,p). \]
In particular, $H^i(X,K_X+\ulcorner H\urcorner)=0$ holds 
for any $i>d-\inf(d,p)$.
\end{thm}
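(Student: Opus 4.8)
The plan is to deduce this as a twisted, logarithmic form of the Deligne--Illusie decomposition theorem, in the style of Esnault--Viehweg; the lifting hypothesis on $(X,D)$ enters exactly once, but decisively. Writing $F\colon X\ra X'$ for the relative Frobenius onto the Frobenius twist $X'$, I would first use the assumed lifting of $(X,D)$ over $W_2(k)$ to promote the logarithmic Cartier isomorphism to a splitting in the derived category $D(\OO_{X'})$,
\[ \bigoplus_{j<p}\Omega^j_{X'}(\log D')[-j]\ \stackrel{\sim}{\ra}\ \tau_{<p}\,F_*\Omega^\bullet_X(\log D). \]
Constructing this decomposition from the mod $p^2$ lifting is the technical heart of the argument and the step I expect to cost the most; note that the truncation $\tau_{<p}$ is intrinsic to the method and is exactly what will force the bound $\inf(d,p)$ into the conclusion.

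Second, I would reduce the ample $\Q$-divisor $H$ to an integral, line-bundle situation by a cyclic cover. Since $\Supp(\langle H\rangle)\subseteq D$, choose an integer $N$ prime to $p$ for which $N\langle H\rangle$ is an integral divisor supported in $D$, and form the degree-$N$ cyclic cover $\pi\colon Y\ra X$ adapted to the fractional part $\langle H\rangle$ (an $N$-th root extraction along $D$, in the manner of Esnault--Viehweg). Because $N$ is prime to $p$ the cover is tamely ramified over $D$, it lifts over $W_2(k)$ together with the reduced divisor $\bar D=(\pi^*D)_{\rm red}$, and the group $\mu_N$ acts. The role of the cover is to turn the \emph{fractional} twist -- which carries no de Rham differential on $X$ itself -- into the honest logarithmic de Rham complex $\Omega^\bullet_Y(\log\bar D)$ upstairs. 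Applying the decomposition of the first step to $(Y,\bar D)$ and splitting into $\mu_N$-eigenspaces, which is legitimate precisely because $k[\mu_N]$ is semisimple when $N$ is prime to $p$, I would extract the eigencomponent attached to $\ulcorner H\urcorner$; the bookkeeping of round-ups and fractional parts here is the delicate point, but it yields a decomposition
\[ \bigoplus_{j<p}\Omega^j_{X'}(\log D')(-\ulcorner H'\urcorner)[-j]\ \stackrel{\sim}{\ra}\ \tau_{<p}\,F_*\bigl(\Omega^\bullet_X(\log D)(-\ulcorner H\urcorner)\bigr), \]
in which the right-hand complex carries the $\OO_{X'}$-linear differential inherited from $Y$.

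Finally I would conclude by positivity. Taking hypercohomology and comparing degrees, the decomposition splits $\mathbb{H}^m$ of the truncated complex, for $m<p$, into $\bigoplus_{i+j=m}H^i(X',\Omega^j_{X'}(\log D')(-\ulcorner H'\urcorner))$, which over the perfect field $k$ agrees with $\bigoplus_{i+j=m}H^i(X,\Omega^j_X(\log D)(-\ulcorner H\urcorner))$; so it suffices to show this hypercohomology vanishes for $m<\inf(d,p)$. Here the ampleness of $H$ is used: since $F^*$ multiplies the cohomological twist by $p$, the complex is governed by the $pH$-positive data $F^*\OO_{X'}(\ulcorner H'\urcorner)$, and a Serre-type vanishing on $X'$ kills the hypercohomology in the range $m<\inf(d,p)$ (this positivity step, which must be arranged for the given $H$ rather than a large multiple, is the second place where care is needed). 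Feeding the vanishing back through the eigenspace decomposition forces each Hodge piece $H^i(X,\Omega^j_X(\log D)(-\ulcorner H\urcorner))$ with $i+j<\inf(d,p)$ to vanish. The ``in particular'' assertion is then the case $j=0$, which gives $H^i(X,\OO_X(-\ulcorner H\urcorner))=0$ for $i<\inf(d,p)$; Serre duality $H^i(X,\OO_X(-\ulcorner H\urcorner))\cong H^{d-i}(X,K_X+\ulcorner H\urcorner)^\vee$ then converts this into $H^i(X,K_X+\ulcorner H\urcorner)=0$ for $i>d-\inf(d,p)$.
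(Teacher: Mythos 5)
Your first step---using the $W_2(k)$-lifting of $(X,D)$ to split $\tau_{<p}F_*\Omega^\bullet_X(\log D)$ \`a la Deligne--Illusie---is indeed the heart of the matter (note the paper does not reprove Theorem \ref{2.8} at all; it cites Hara \cite{ha98}, whose argument starts exactly this way). The genuine trouble is your second step, the cyclic-cover reduction of the fractional part. First, to form ``the degree-$N$ cyclic cover adapted to $\langle H\rangle$'' you need an invertible sheaf $\LL$ with $\LL^N\cong\OO_X(N\langle H\rangle)$, and such an $N$-th root need not exist in $\Pic(X)$; this is precisely why, in characteristic $0$, one resorts to Kawamata's covering lemma (Bloch--Gieseker-type covers) rather than a single cyclic cover, and those constructions are not available here compatibly with $W_2(k)$-liftings. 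Second, even when the root exists, the cyclic cover $Y$ is smooth only where the \emph{reduced branch divisor} is smooth---compare the paper's own Theorem \ref{4.1}(i), which requires $\Sing(D_{\rm red})=\emptyset$---whereas $N\langle H\rangle$ will in general involve several components of $D$ meeting one another, so $Y$ is singular at the crossings, $(Y,\bar D)$ is not a smooth pair with simple normal crossing boundary, and Deligne--Illusie cannot be applied upstairs. (Your further claim that the cover lifts over $W_2(k)$ together with $\bar D$ is only asserted; that part is actually repairable, since $\gcd(N,p)=1$ makes the obstruction classes in $H^1(X,\OO_X)$ and $H^2(X,\OO_X)$ divisible by $N$, but the smoothness problem is not.)

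The endgame also does not work as written: for the fixed $H$ there is no ``Serre-type vanishing'' killing $\mathbb{H}^m$ of $\Omega^\bullet_X(\log D)(-\ulcorner pH\urcorner)$; a single application of the decomposition only yields the inequality $\sum_{i+j=m}h^i(\Omega^j_X(\log D)(-\ulcorner H\urcorner))\le\sum_{i+j=m}h^i(\Omega^j_X(\log D)(-\ulcorner pH\urcorner))$. The standard proof (Hara's, and Esnault--Viehweg's characteristic-$p$ chapters) repairs both defects at once \emph{without any cover}: since $p\ulcorner H\urcorner-\ulcorner pH\urcorner$ is an effective integral divisor $E$ supported on $D$ with all coefficients at most $p-1$, one has $\OO_X(-\ulcorner pH\urcorner)=F^*\OO_{X'}(-\ulcorner H'\urcorner)\otimes\OO_X(E)$, and twisting the log de Rham complex by such an $E$ preserves the $\OO_{X'}$-linearity of the differential and does not change the cohomology sheaves (a generalized logarithmic Cartier isomorphism); hence the splitting from your first step transfers directly to $\tau_{<p}F_*\bigl(\Omega^\bullet_X(\log D)(-\ulcorner pH\urcorner)\bigr)$, with summands $\Omega^j_{X'}(\log D')(-\ulcorner H'\urcorner)[-j]$. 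One then iterates, replacing $H$ by $pH,p^2H,\dots$ (legitimate because $\Supp\langle p^nH\rangle\subseteq D$ and the same lifting of $(X,D)$ serves at every stage), and finally applies Serre vanishing together with Serre duality to $-\ulcorner p^nH\urcorner$ for $n\gg0$, using that only finitely many fractional corrections $\{p^nH\}$ occur. This is also where the two halves of the bound come from: $p$ from the truncation, and $d$ from the duality step, which needs $i\le m<d$. Your deduction of the ``in particular'' statement from the case $j=0$ via Serre duality is fine.
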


If $X$ is a smooth projective variety strongly liftable over 
$W_2(k)$, then the Kawamata-Viehweg vanishing theorem holds on $X$, 
which is a direct consequence of Definition \ref{2.4} and Theorem 
\ref{2.8}.

\section{Proofs of the main theorems}\label{S3}

In \cite{xie10}, it was proved that $\A^n_k$, $\PP^n_k$, smooth projective 
curves, smooth rational surfaces, and certian smooth complete intersections 
in $\PP^n_k$ are strongly liftable over $W_2(k)$. It follows easily from 
Proposition \ref{2.5} that smooth affine schemes are strongly liftable over 
$W_2(k)$. In this section, we shall give further examples of strongly liftable 
schemes and some applications to vanishing theorems.

\begin{thm}\label{3.2}
Any smooth toric variety is strongly liftable over $W_2(k)$.
\end{thm}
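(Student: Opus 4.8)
The plan is to verify the sufficient criterion for strong liftability in Proposition \ref{2.5}, exploiting the fact that a smooth toric variety is built from purely combinatorial data. Write $X=X(\Delta)$ for the fan $\Delta$ in $N_\R$ with dual lattice $M$, and let $D_1,\dots,D_n$ be the torus-invariant prime divisors corresponding to the rays of $\Delta$. The same fan defines a toric scheme $\wt{X}$ over $W_2(k)$ (indeed over $\Z$), obtained by gluing the affine charts $\spec W_2(k)[\sigma^\vee\cap M]$; since the gluing data have integer coefficients, $\wt{X}$ is flat over $W_2(k)$ with $\wt{X}\times_{\spec W_2(k)}\spec k=X$, and each $D_i$ lifts to the torus-invariant divisor $\wt{D}_i\subset\wt{X}$ cut out by the same ray. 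Smoothness of $\wt{X}$ and the relative normal crossing property of $\sum\wt{D}_i$ then follow from Lemma \ref{2.2}. This $\wt{X}$ will serve as the fixed lifting demanded by Definition \ref{2.4}, and it remains to check conditions (i) and (ii) of Proposition \ref{2.5}.

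For condition (i), I would invoke the standard exact sequence of toric geometry exhibiting the class group of $X$ as generated by the classes $[\OO_X(D_i)]$ of the torus-invariant prime divisors; since $X$ is smooth, this computes $\Pic(X)$. Each generator lifts, as $\OO_{\wt{X}}(\wt{D}_i)$ restricts to $\OO_X(D_i)$, so the map $r\colon H^1(\wt{X},\OO^*_{\wt{X}})=\Pic(\wt{X})\ra\Pic(X)=H^1(X,\OO^*_X)$ hits every generator and is therefore surjective.

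For condition (ii), let $D$ be an arbitrary prime divisor on $X$. The key reduction is that on a toric variety every Weil divisor is linearly equivalent to a torus-invariant one, since the complement of $\bigcup_i D_i$ is the big torus, whose class group is trivial; thus $D\sim\sum_i a_iD_i$ for suitable $a_i\in\Z$, and I take the lifting $\wt{\LL}_D:=\OO_{\wt{X}}(\sum_i a_i\wt{D}_i)$ of $\LL_D=\OO_X(D)$. Under this identification the global sections $H^0(X,\LL_D)$ have the explicit monomial basis given by the characters $\chi^m$ with $m$ ranging over the lattice points of $\{m\in M_\R : \langle m,v_i\rangle\geq -a_i \text{ for all } i\}$, where $v_i$ is the primitive generator of the $i$-th ray. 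The decisive point is that each $\chi^m$ is defined over $\Z$ and hence lifts to a section of $\wt{\LL}_D$ over $W_2(k)$; concretely the same lattice points furnish a $W_2(k)$-basis of $H^0(\wt{X},\wt{\LL}_D)$ reducing modulo $p$ to the given $k$-basis of $H^0(X,\LL_D)$. Therefore $r_D\colon H^0(\wt{X},\wt{\LL}_D)\ra H^0(X,\LL_D)$ is surjective, and Proposition \ref{2.5} gives that $X$ is strongly liftable.

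The step I expect to be the main obstacle is condition (ii), the surjectivity of $r_D$ on global sections: Remark \ref{2.7} shows that this can already fail for line bundles on curves, so it is essential that the toric structure supply a basis of sections defined integrally. The two facts that make it work—that an arbitrary prime divisor may be replaced up to linear equivalence by a torus-invariant combination, and that the sections of an invariant line bundle are spanned by characters independent of the base ring—are precisely what must be invoked with care, whereas the remaining ingredients (the canonical lift of $X$ and of the $\wt{D}_i$, and the generation of $\Pic(X)$ by invariant classes) are formal.
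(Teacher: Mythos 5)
Your proposal is correct and follows essentially the same route as the paper: take $\wt{X}=X(\Delta,W_2(k))$ as the fixed lifting, reduce an arbitrary prime divisor to a linearly equivalent torus-invariant divisor via the sequence $0\ra M\ra\Div_T(X)\ra\Pic(X)\ra 0$, lift it combinatorially, and use the character basis $\bigoplus_{u\in P_D\cap M}$ of global sections (valid over any base ring) to get the surjectivity required in Proposition \ref{2.5}. The only cosmetic difference is that the paper additionally notes the abstract liftability of line bundles via $H^2(X,\OO_X)=0$ before giving the same explicit torus-invariant argument you use.
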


\begin{proof}
First of all, we recall some definitions and notations for toric varieties 
from \cite{fu93}. Let $N$ be a lattice of rank $n$, and $M$ the dual lattice 
of $N$. Let $\Delta$ be a fan consisting of strongly convex rational 
polyhedral cones in $N_\R$, and let $A$ be a ring. In general, we denote 
the toric variety associated to the fan $\Delta$ over the ground ring $A$ 
by $X(\Delta,A)$. More precisely, to each cone $\sigma$ in $\Delta$, there 
is an associated affine toric variety $U_{(\sigma,A)}=\spec A[\sigma^\vee
\cap M]$, and these $U_{(\sigma,A)}$ can be glued together to form the toric 
variety $X(\Delta,A)$ over $\spec A$. Note that almost all definitions, 
constructions and results for toric varieties are independent of the ground 
ring $A$, although everything is stated in \cite{fu93} over the complex 
number field $\C$.

Let $X=X(\Delta,k)$ be a smooth toric variety over $k$ associated to a fan 
$\Delta$. Let $\wt{X}=X(\Delta,W_2(k))$. Note that $U_{(\sigma,k)}=\spec 
k[\sigma^\vee\cap M]$, $U_{(\sigma,W_2(k))}=\spec W_2(k)[\sigma^\vee\cap M]$ 
is flat over $W_2(k)$ and $U_{(\sigma,W_2(k))}\times_{\spec W_2(k)}\spec k
=U_{(\sigma,k)}$, hence $\wt{X}$ is a lifting of $X$ over $W_2(k)$.

By \cite[Page 74, Corollary]{fu93}, we have $H^2(X,\OO_X)=0$, which implies 
that any invertible sheaf $\LL$ on $X$ has a lifting $\wt{\LL}$ on $\wt{X}$ 
by the exact sequence (\ref{es5}). In fact, we can prove the liftability of 
invertible sheaves in an explicit way. Let $\LL$ be an invertible sheaf on 
$X$. By \cite[Page 63, Proposition]{fu93}, we have an exact sequence:
\[
0\ra M\ra \Div_T(X)\ra \Pic(X)\ra 0.
\]
Therefore there exists a torus invariant divisor $D$ on $X$ such that 
$\LL=\OO_X(D)$. Assume that $\{ u(\sigma)\in M/M(\sigma) \}\in\projlim 
M/M(\sigma)$ determines the torus invariant divisor $D$. Then the same data 
$\{ u(\sigma)\in M/M(\sigma) \}$ also determines a torus invariant divisor 
$\wt{D}$ on $\wt{X}$ (we have only to change the base $k$ into $W_2(k)$). 
Thus the invertible sheaf $\LL=\OO_X(D)$ has a lifting $\wt{\LL}=
\OO_{\wt{X}}(\wt{D})$ on $\wt{X}$.

Let $E$ be a prime divisor on $X$, and $\LL=\OO_X(E)$ the associated 
invertible sheaf on $X$. Then we can take torus invariant divisors $D$ 
and $\wt{D}$ as above such that $\LL=\OO_X(D)$ and the invertible sheaf 
$\wt{\LL}=\OO_{\wt{X}}(\wt{D})$ lifts $\LL$. 
Let $v_i$ be the first lattice points in the edges of the cones 
of maximal dimension in $\Delta$, $D_i$ the corresponding orbit 
closures in $X$, and $\wt{D}_i$ the corresponding orbit closures in $\wt{X}$ 
($1\leq i\leq N$). Then the torus invariant divisors $D=\sum^N_{i=1}a_iD_i$ 
and $\wt{D}=\sum^N_{i=1}a_i\wt{D}_i$ determine a rational convex polyhedral 
$P_D$ in $M_\R$ defined by
\[
P_D=\{ u\in M_\R\,\,|\,\,\langle u,v_i\rangle\geq -a_i,\,\,1\leq i\leq N \}.
\]
By \cite[Page 66, Lemma]{fu93}, we have
\[
H^0(X,D)=\bigoplus_{u\in P_D\cap M}k\cdot\chi^u,\,\,
H^0(\wt{X},\wt{D})=\bigoplus_{u\in P_D\cap M}W_2(k)\cdot\chi^u.
\]
Thus the map $H^0(\wt{X},\wt{D})\stackrel{r}{\ra}H^0(X,D)$ induced by the 
natural surjection $W_2(k)\stackrel{r}{\ra}k$ is obviously surjective. 
Hence $r_E:H^0(\wt{X},\wt{\LL})\ra H^0(X,\LL)$ is surjective. By Proposition 
\ref{2.5}, $X$ is strongly liftable over $W_2(k)$.
\end{proof}

As a consequence, we have the Kawamata-Viehweg vanishing theorem on smooth 
projective toric varieties in positive characteristic for ample $\Q$-divisors 
which are not necessarily torus invariant, whereas Musta\c{t}\v{a} \cite{mu} 
and Fujino \cite{fu07} have proved certain more general versions of the 
Kawamata-Viehweg vanishing theorem on complete toric varieties in arbitrary 
characteristic for nef and big torus invariant $\Q$-divisors.

\begin{cor}\label{3.3}
Let $X$ be a smooth projective toric variety of dimension $d$, 
$H$ an ample $\Q$-divisor on $X$, and $D$ a simple normal crossing 
divisor containing $\Supp(\langle H\rangle)$. Then 
\[ H^i(X,\Omega_X^j(\log D)(-\ulcorner H\urcorner))=0 \,\,\,\,
\hbox{holds for any} \,\,\,\, i+j<\inf(d,p). \]
In particular, $H^i(X,K_X+\ulcorner H\urcorner)=0$ holds 
for any $i>d-\inf(d,p)$.
\end{cor}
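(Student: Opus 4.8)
The plan is to deduce the statement directly from Theorem \ref{3.2} and Theorem \ref{2.8}; indeed this corollary is precisely the instance of the remark following Theorem \ref{2.8} in which the strongly liftable variety is taken to be toric. The crux is that strong liftability, as formulated in Definition \ref{2.4}, provides a \emph{single} lifting $\wt{X}$ of $X$ over $W_2(k)$ that simultaneously accommodates liftings of \emph{all} prime divisors, so that an arbitrary simple normal crossing divisor $D$ can be lifted on this one fixed $\wt{X}$.

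First I would apply Theorem \ref{3.2} to conclude that the smooth projective toric variety $X$ is strongly liftable over $W_2(k)$, and fix the lifting $\wt{X}$ furnished by Definition \ref{2.4}. Next, writing $D=\sum_i D_i$ with each $D_i$ a prime divisor, I would invoke the defining property of strong liftability to lift each $D_i$ to a closed subscheme $\wt{D}_i\subset\wt{X}$, flat over $W_2(k)$, where crucially the same $\wt{X}$ serves for every index $i$. Setting $\wt{D}=\sum_i\wt{D}_i$ then produces a lifting $(\wt{X},\wt{D})$ of the pair $(X,D)$ over $W_2(k)$ in the sense of Definition \ref{2.1}. Since $X$ is smooth and $D$ is simple normal crossing, Lemma \ref{2.2} moreover guarantees that $\wt{X}$ is smooth over $W_2(k)$ and that $\wt{D}$ is relatively simple normal crossing.

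With the lifting of the pair in hand, the remainder is a direct appeal to Theorem \ref{2.8}. The hypotheses are now met: $X$ is a smooth projective variety of dimension $d$, $H$ is an ample $\Q$-divisor, $D$ is a simple normal crossing divisor containing $\Supp(\langle H\rangle)$, and $(X,D)$ admits a lifting over $W_2(k)$. Theorem \ref{2.8} therefore yields $H^i(X,\Omega_X^j(\log D)(-\ulcorner H\urcorner))=0$ for all $i+j<\inf(d,p)$, and the stated special case $H^i(X,K_X+\ulcorner H\urcorner)=0$ for $i>d-\inf(d,p)$ is inherited verbatim from the corresponding clause of Theorem \ref{2.8}.

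There is no substantive obstacle to surmount: the entire content of the corollary is the liftability of the pair $(X,D)$, which is exactly what Theorem \ref{3.2} supplies. The one point deserving emphasis, and the very reason the torus-invariance hypothesis present in the results of Musta\c{t}\v{a} and Fujino can be dropped, is that Definition \ref{2.4} fixes $\wt{X}$ uniformly across all prime divisors. This is what allows the liftings $\wt{D}_i$ of the possibly non-torus-invariant components to glue into a single lifting $\wt{D}$ on one scheme, rather than producing incompatible liftings on different schemes for different components.
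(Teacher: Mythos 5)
Your proposal is correct and matches the paper's proof, which simply cites Theorems \ref{3.2} and \ref{2.8}; your expansion of why strong liftability yields a lifting of the pair $(X,D)$ — lifting each prime component $D_i$ on the single fixed $\wt{X}$ of Definition \ref{2.4} and summing — is exactly the reasoning the paper leaves implicit (and records in the remark following Theorem \ref{2.8}).
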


\begin{proof}
It follows from Theorems \ref{2.8} and \ref{3.2}.
\end{proof}

\begin{cor}\label{3.4}
Let $X$ be a smooth toric variety of dimension $d$, $\pi:X\ra S$ a projective 
toric morphism onto a toric variety $S$, and $H$ a $\pi$-ample $\Q$-divisor 
on $X$ with $\Supp(\langle H\rangle)$ being simple normal crossing. Then 
$R^i\pi_*\OO_X(K_X+\ulcorner H\urcorner)=0$ holds for any $i>d-\inf(d,p)$.
\end{cor}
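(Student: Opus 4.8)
The plan is to reduce the relative vanishing statement $R^i\pi_*\OO_X(K_X+\ulcorner H\urcorner)=0$ to the absolute vanishing already established in Corollary \ref{3.3}. The natural mechanism is to analyze the Leray spectral sequence for $\pi\colon X\ra S$ together with a suitable affine cover of the base $S$. First I would observe that $S$ is itself a toric variety, so it carries a canonical affine cover by the torus-invariant affine opens $U_{(\tau,k)}=\spec k[\tau^\vee\cap M_S]$ attached to the cones $\tau$ of the fan of $S$. Over each such affine $U=U_{(\tau,k)}$, the preimage $\pi^{-1}(U)$ is again a smooth toric variety, and the restriction of $H$ remains $\pi$-ample; the point is that the quasi-coherent sheaf $R^i\pi_*\OO_X(K_X+\ulcorner H\urcorner)$ is determined by its sections over these affines, namely $R^i\pi_*(\cdots)|_U$ is the sheaf associated to $H^i(\pi^{-1}(U),K_X+\ulcorner H\urcorner)$ since $U$ is affine.

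The key step is therefore to verify the vanishing $H^i(\pi^{-1}(U),\OO(K_X+\ulcorner H\urcorner))=0$ for each affine $U$ in the range $i>d-\inf(d,p)$. Here I would use that over the affine base $U$, the $\pi$-ampleness of $H$ becomes genuine ampleness of $H|_{\pi^{-1}(U)}$ relative to the affine morphism $\pi^{-1}(U)\ra U$: more precisely, because $U$ is affine, a $\pi$-ample divisor restricts to a divisor that is ample for the projective morphism onto an affine scheme, and one can perturb by the pullback of an ample divisor on $U$ to produce an honestly ample $\Q$-divisor on $\pi^{-1}(U)$ with the same round-up and the same fractional support, so that Corollary \ref{3.3} applies verbatim on the smooth projective toric variety $\pi^{-1}(U)$. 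The simple normal crossing hypothesis on $\Supp(\langle H\rangle)$ is preserved under restriction to the open $\pi^{-1}(U)$, so the divisor $D\supset\Supp(\langle H\rangle)$ needed to invoke Corollary \ref{3.3} is available.

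Having the fiberwise vanishing, I would conclude by noting that a quasi-coherent sheaf on $S$ whose sections over every member of an affine open cover vanish is itself zero; since the range $i>d-\inf(d,p)$ is independent of $U$, the global higher direct image $R^i\pi_*\OO_X(K_X+\ulcorner H\urcorner)$ vanishes in exactly this range. The main obstacle I anticipate is the relative-to-absolute ampleness reduction: one must argue carefully that after passing to $\pi^{-1}(U)$ over an affine base the $\pi$-ample $\Q$-divisor can be replaced by a genuinely ample $\Q$-divisor without altering $\ulcorner H\urcorner$ or the simple normal crossing structure of $\langle H\rangle$, so that the hypotheses of Corollary \ref{3.3} are met on the nose. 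Once this local ampleness is in place, the spectral-sequence bookkeeping and the affine-cover patching are routine.
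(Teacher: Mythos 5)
Your reduction to torus-invariant affine opens $U\subset S$ is fine as far as it goes (indeed $R^i\pi_*\OO_X(K_X+\ulcorner H\urcorner)|_U$ is the sheaf associated to $H^i(\pi^{-1}(U),K_X+\ulcorner H\urcorner)$), but the key step fails: $\pi^{-1}(U)$ is \emph{not} a projective variety --- it is only projective \emph{over} the affine $U$, hence quasi-projective and non-proper whenever $\pi$ has positive-dimensional fibers --- so Corollary \ref{3.3} cannot be applied to it ``verbatim''. The vanishing theorem behind Corollary \ref{3.3} (Theorem \ref{2.8}, Hara's positive-characteristic Kawamata--Viehweg theorem) is a statement about smooth \emph{projective} varieties; its proof uses properness essentially (degeneration of the Hodge spectral sequence, duality), and no perturbation of the divisor can repair this: the obstruction is the non-properness of the total space $\pi^{-1}(U)$, not any failure of ampleness of $H|_{\pi^{-1}(U)}$. (In fact, over an affine base, $\pi$-ample already implies ample as a line bundle, so your perturbation buys nothing.) Worse, the ``fiberwise vanishing'' $H^i(\pi^{-1}(U),K_X+\ulcorner H\urcorner)=0$ over every affine chart is, by your own first paragraph, \emph{equivalent} to the corollary being proved; so the proposal essentially restates the theorem at exactly the point where a proof is needed, and is circular.

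What is actually needed --- and what the paper does, by invoking Corollary \ref{3.3} together with the method of proof of \cite[Theorem 1-2-3]{kmm} --- is the standard relative-to-absolute reduction by twisting with an ample divisor pulled back from a \emph{projective} base. When $S$ is projective (one reduces to this case, e.g.\ by localizing on $S$ and compactifying torically), $X$ itself is a smooth projective toric variety; choose $A$ ample on $S$ and $m\gg 0$. Then $H+m\pi^*A$ is an ample $\Q$-divisor on $X$ with $\langle H+m\pi^*A\rangle=\langle H\rangle$ and $\ulcorner H+m\pi^*A\urcorner=\ulcorner H\urcorner+m\pi^*A$, so Corollary \ref{3.3} gives $H^i(X,K_X+\ulcorner H\urcorner+m\pi^*A)=0$ in the stated range of $i$. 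By Serre vanishing, $H^j(S,R^i\pi_*\OO_X(K_X+\ulcorner H\urcorner)\otimes\OO_S(mA))=0$ for $j>0$ and $m\gg 0$, so the Leray spectral sequence degenerates and identifies $H^0(S,R^i\pi_*\OO_X(K_X+\ulcorner H\urcorner)\otimes\OO_S(mA))$ with the group just shown to vanish; since for $m\gg 0$ this twist is also globally generated, the sheaf $R^i\pi_*\OO_X(K_X+\ulcorner H\urcorner)$ itself is zero. This twisting-plus-Leray step (or, alternatively, a genuine compactification argument) is the missing idea in your proposal.
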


\begin{proof}
It follows from Corollary \ref{3.3} and a similar proof to that of 
\cite[Theorem 1-2-3]{kmm}.
\end{proof}

The following vanishing result \cite[Corollary 2.2.5]{kk} is useful, 
which holds in arbitrary characteristic.

\begin{lem}\label{3.5}
Let $f:Y\ra X$ be a proper birational morphism between normal surfaces 
with $Y$ smooth and with exceptional locus $E=\cup_{i=1}^s E_i$. 
Let $L$ be an integral divisor on $Y$, $0\leq b_1,\cdots,b_s<1$ 
rational numbers, and $N$ an $f$-nef $\Q$-divisor on $Y$. Assume 
$ L\equiv K_Y+\sum_{i=1}^s b_iE_i+N$. Then $R^1f_*\OO_Y(L)=0$ holds.
\end{lem}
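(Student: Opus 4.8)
The plan is to prove this as a purely local statement on $X$, reducing the vanishing of $R^1f_*\OO_Y(L)$ to a cohomology computation on one-dimensional thickenings of the exceptional fibres, and then to settle that computation by a numerical induction driven by the negative definiteness of the intersection form of $E$. Since $f$ is an isomorphism over $X\setminus f(E)$, the coherent sheaf $R^1f_*\OO_Y(L)$ is supported on the finitely many points $x$ below $E$, so it suffices to kill its completed stalk at each such $x$. By the theorem on formal functions this stalk is $\varprojlim_n H^1(Y_n,\OO_Y(L)\otimes\OO_{Y_n})$, where $Y_n$ is cut out by $\mathfrak m_x^n\OO_Y$. As $\sqrt{\mathfrak m_x\OO_Y}$ defines the reduced fibre $E_x=\bigcup_{f(E_i)=x}E_i$, one has $\OO_Y(-mnE_x)\subset\mathfrak m_x^n\OO_Y$ for a suitable $m$, so $Y_n$ is a subscheme of the effective divisor $D_n=mnE_x$ supported on $E$; because the fibres are curves the induced map $H^1(D_n,\OO_{D_n}(L))\ra H^1(Y_n,\OO_Y(L)\otimes\OO_{Y_n})$ is surjective. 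Hence it is enough to prove $H^1(D,\OO_D(L))=0$ for \emph{every} effective divisor $D$ supported on $E$, where $\OO_D(L):=\OO_Y(L)\otimes\OO_D$.

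I would prove this by induction on $D$, peeling off one component at a time. Writing $D=D'+E_j$, the exact sequence
\[ 0\ra \OO_{E_j}(L-D')\ra \OO_D(L)\ra \OO_{D'}(L)\ra 0 \]
reduces the claim for $D$ to the inductive claim for $D'$ together with $H^1(E_j,\OO_{E_j}(L-D'))=0$. By Serre duality on the integral projective curve $E_j$ and adjunction $\omega_{E_j}=(K_Y+E_j)|_{E_j}$, this vanishing holds as soon as $\deg_{E_j}\OO_{E_j}(L-D')>\deg\omega_{E_j}$, that is, $(L-D-K_Y)\cdot E_j>0$; invoking $L\equiv K_Y+\sum b_iE_i+N$ this is exactly $(\sum b_iE_i+N-D)\cdot E_j>0$. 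So the induction goes through provided one can always select, from a nonzero effective $D$ supported on $E$, a component $E_j\leq D$ with $(\sum b_iE_i+N-D)\cdot E_j>0$.

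This numerical selection is the heart of the matter and the step I expect to be the main obstacle, since it is where all three hypotheses must interact. I would argue by contradiction: suppose $(\sum b_iE_i+N-D)\cdot E_j\leq0$ for every component $E_j$ of $D$, put $S=\Supp D$, and let $Z$ be the unique $\Q$-divisor supported on $S$ with $Z\cdot E_j=(\sum b_iE_i+N)\cdot E_j$ for all $j\in S$. The assumption then reads $(Z-D)\cdot E_j\leq0$ on $S$; since the restricted matrix $(E_i\cdot E_j)_{i,j\in S}$ is negative definite with nonpositive off-diagonal entries, it is a Stieltjes matrix and its negative inverse is entrywise nonnegative, which forces $Z\geq D$, hence $z_i\geq c_i\geq1>b_i$ for all $i\in S$. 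Consider $W=Z-\sum_{i\in S}b_iE_i$: its coefficients $z_i-b_i$ are strictly positive (this is where $b_i<1$ enters), and for $j\in S$ one computes $W\cdot E_j=N\cdot E_j+\sum_{i\notin S}b_iE_i\cdot E_j\geq0$ (using $N\cdot E_j\geq0$ from $f$-nefness and $b_i\geq0$). Therefore $W^2=\sum_{j\in S}(z_j-b_j)(W\cdot E_j)\geq0$, contradicting the negative definiteness of the form on $S$ applied to the nonzero $W$.

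Consequently such an $E_j$ always exists, the peeling induction terminates at $D=0$, and $H^1(D,\OO_D(L))=0$ for every effective $D$ supported on $E$; running this back through the formal-functions reduction yields $R^1f_*\OO_Y(L)=0$. I expect the reductions in the first paragraph and the curve-by-curve step to be routine bookkeeping, and the genuinely delicate point to be the numerical lemma above, whose proof is precisely calibrated to the hypotheses $0\leq b_i<1$ and $N$ \emph{$f$-nef} together with the negativity of the exceptional configuration.
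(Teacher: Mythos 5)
Your proof is correct, and every step checks out. One structural remark first: the paper does not actually prove Lemma \ref{3.5} at all --- it is quoted from \cite[Corollary 2.2.5]{kk} --- so there is no internal argument to compare with; what you have supplied is the classical Artin--Lipman style proof that underlies the cited result, and it is valid in arbitrary characteristic, which is exactly what the paper needs. Your three stages are all sound. (1) The formal-functions reduction works: $\II_{E_x}=\sqrt{\mathfrak{m}_x\OO_Y}$ gives $\OO_Y(-mnE_x)\subset\mathfrak{m}_x^n\OO_Y$ for suitable $m$, the surjectivity of $H^1(D_n,\OO_{D_n}(L))\ra H^1(Y_n,\OO_Y(L)\otimes\OO_{Y_n})$ holds because $H^2$ of any coherent sheaf on a one-dimensional scheme vanishes, and the vanishing of all completed stalks kills the coherent sheaf $R^1f_*\OO_Y(L)$ by faithful flatness of completion. (2) The peeling step works: each $E_j$ is integral but possibly singular, yet as a Cartier divisor on the smooth surface $Y$ it is Gorenstein, so Serre duality together with adjunction $\omega_{E_j}=\OO_{E_j}(K_Y+E_j)$ gives the degree criterion $(L-K_Y-D)\cdot E_j>0$, and numerical equivalence converts this into $(\sum_i b_iE_i+N-D)\cdot E_j>0$ as you state. (3) The numerical selection lemma, which you rightly identify as the crux, is also correct: negative definiteness of $(E_i\cdot E_j)_{i,j\in S}$ (Mumford's theorem, valid in any characteristic) makes the negative of that matrix a Stieltjes matrix, so from $(Z-D)\cdot E_j\leq 0$ for all $j\in S$ you correctly deduce $Z\geq D$; then $W=Z-\sum_{i\in S}b_iE_i$ has strictly positive coefficients (this is where $b_i<1$ enters), satisfies $W\cdot E_j=N\cdot E_j+\sum_{i\notin S}b_iE_i\cdot E_j\geq 0$ (this is where $f$-nefness and $b_i\geq 0$ enter), hence $W^2\geq 0$, contradicting negative definiteness. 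The only facts you import without proof --- negative definiteness of the exceptional intersection form and entrywise nonnegativity of the inverse of a Stieltjes matrix --- are standard citable facts; the latter can also be proved directly by the usual decomposition $c=c^+-c^-$ argument. Compared with the paper's bare citation, your argument buys self-containedness and makes visible exactly where each hypothesis ($0\leq b_i<1$, $f$-nefness, negativity of the exceptional configuration) is used.
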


The following lemma is a generalization of \cite[(1.2)]{el}, which holds 
in arbitrary characteristic.

\begin{lem}\label{3.6}
Let $X$ be a normal projective surface, $f:Y\ra X$ a resolution, and $H$ a 
$\Q$-Cartier $\Q$-divisor on $X$. If $H^1(Y,K_Y+\ulcorner f^*H\urcorner)=0$, 
then $H^1(X,K_X+\ulcorner H\urcorner)=0$ holds.
\end{lem}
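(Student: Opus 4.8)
The plan is to descend the vanishing along $f$ by means of the Leray spectral sequence, after first killing the relevant higher direct image via Lemma~\ref{3.5}. Write $\FF=\OO_Y(K_Y+\ulcorner f^*H\urcorner)$, where $f^*H$ makes sense because $H$ is $\Q$-Cartier. The Leray spectral sequence $H^p(X,R^qf_*\FF)\Rightarrow H^{p+q}(Y,\FF)$ gives the five-term exact sequence
\[
0\ra H^1(X,f_*\FF)\ra H^1(Y,\FF)\ra H^0(X,R^1f_*\FF)\ra H^2(X,f_*\FF),
\]
so once I show $R^1f_*\FF=0$ I obtain $H^1(X,f_*\FF)\cong H^1(Y,\FF)=0$. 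It then suffices to compare $f_*\FF$ with $\OO_X(K_X+\ulcorner H\urcorner)$ and feed the result into a short exact sequence.

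First I would prove $R^1f_*\FF=0$ using Lemma~\ref{3.5}. Using $\ulcorner f^*H\urcorner=f^*H+\{f^*H\}$, I split the effective $\Q$-divisor $\{f^*H\}$ into its $f$-exceptional part $\{f^*H\}_{\mathrm{exc}}=\sum_i b_iE_i$, with $E_i$ the components of $\Exc(f)$ and $0\le b_i<1$, and its non-exceptional part $\{f^*H\}_{\mathrm{n}}$. Putting $N=f^*H+\{f^*H\}_{\mathrm{n}}$ gives
\[
K_Y+\ulcorner f^*H\urcorner=K_Y+\sum_i b_iE_i+N .
\]
Here $N$ is $f$-nef: by the projection formula $f^*H\cdot E_i=H\cdot f_*E_i=0$, while $\{f^*H\}_{\mathrm{n}}$ is effective with no component equal to any $E_i$, so $\{f^*H\}_{\mathrm{n}}\cdot E_i\ge 0$; since the irreducible contracted curves are precisely the $E_i$, we get $N\cdot E_i\ge 0$ for all $i$. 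Thus Lemma~\ref{3.5} applies and yields $R^1f_*\FF=0$.

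Next I would identify $f_*\FF$ with $\OO_X(K_X+\ulcorner H\urcorner)$ up to a sheaf supported at finitely many points. Fixing compatible canonical divisors with $f_*K_Y=K_X$, both sheaves sit inside the constant sheaf $K(X)=K(Y)$. Along the strict transform $\wt P$ of a prime divisor $P$ on $X$ one has $\mathrm{ord}_{\wt P}(f^*H)=\mathrm{ord}_P(H)$ and $\mathrm{ord}_{\wt P}(K_Y)=\mathrm{ord}_P(K_X)$, so the defining inequality along $\wt P$ for a local section of $f_*\FF$ coincides with that along $P$ for a section of $\OO_X(K_X+\ulcorner H\urcorner)$; the sections of $f_*\FF$ merely satisfy, in addition, the inequalities along the exceptional $E_i$. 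Hence $f_*\FF\subseteq\OO_X(K_X+\ulcorner H\urcorner)$, with equality over the open set where $f$ is an isomorphism, whose complement is contained in the finite set $f(\Exc(f))$. Therefore the cokernel $\QQ$ is supported in dimension $0$, so $H^1(X,\QQ)=0$, and the sequence
\[
0\ra f_*\FF\ra\OO_X(K_X+\ulcorner H\urcorner)\ra\QQ\ra 0
\]
exhibits $H^1(X,\OO_X(K_X+\ulcorner H\urcorner))$ as a quotient of $H^1(X,f_*\FF)=0$, which is the assertion.

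The main obstacle is the codimension-one comparison of the two divisorial sheaves in the last step: it rests on a careful compatible choice of $K_Y$ and $K_X$ (note $K_X$ need not be $\Q$-Cartier, so one must argue with orders of vanishing rather than pulling $K_X$ back) and on the bookkeeping of multiplicities along strict transforms versus exceptional curves, together with the observation that the discrepancy between the two sheaves is concentrated on the finite set $f(\Exc(f))\subseteq\Sing(X)$. By contrast, the vanishing $R^1f_*\FF=0$ is comparatively routine once the decomposition feeding Lemma~\ref{3.5} and the $f$-nefness of $N$ are in place.
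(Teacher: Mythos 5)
Your proof is correct and takes essentially the same route as the paper's: the identical decomposition $K_Y+\ulcorner f^*H\urcorner=K_Y+\{f^*H\}_{\mathrm{exc}}+\bigl(f^*H+\{f^*H\}_{\mathrm{n}}\bigr)$ fed into Lemma~\ref{3.5}, followed by Leray and the comparison of $f_*\OO_Y(K_Y+\ulcorner f^*H\urcorner)$ with $\OO_X(K_X+\ulcorner H\urcorner)$ modulo a quotient sheaf supported on finitely many points. The only cosmetic difference is that the paper obtains the inclusion $f_*\OO_Y(K_Y+\ulcorner f^*H\urcorner)\subseteq\OO_X(K_X+\ulcorner H\urcorner)$ by citing the trace map of \cite[Propositions 5.75 and 5.77]{km}, whereas you argue directly with orders of vanishing along strict transforms after fixing compatible canonical divisors.
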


\begin{proof}
Let $E=\cup_{i=1}^s E_i$ be the exceptional locus of $f$. 
Write $H=\ulcorner H\urcorner-\sum^r_{j=1}b_jD_j$, where $D_j$ are 
distinct prime divisors and $0<b_j<1$. Write 
$f^*D_j=f^{-1}_*D_j+\sum^s_{i=1}q_{ji}E_i$, where $f^{-1}_*D_j$ is 
the strict transform of $D_j$ and $q_{ij}\geq 0$. Thus we have
\begin{eqnarray}
f^*H&=&f^*\ulcorner H\urcorner-\sum^r_{j=1}b_jf^{-1}_*D_j
-\sum^s_{i=1}(\sum^r_{j=1}b_jq_{ji})E_i, \label{es8} \\
\ulcorner f^*H\urcorner&=&f^*\ulcorner H\urcorner
-\sum^s_{i=1}\big[\sum^r_{j=1}b_jq_{ji}\big]E_i. \label{es9}
\end{eqnarray}
By (\ref{es8}) and (\ref{es9}), we have
\begin{eqnarray*}
K_Y+\ulcorner f^*H\urcorner\equiv K_Y+\sum^s_{i=1}\big\langle\sum^r_{j=1}
b_jq_{ji}\big\rangle E_i+(f^*H+\sum^r_{j=1}b_jf^{-1}_*D_j),
\end{eqnarray*}
which implies $R^1f_*\OO_Y(K_Y+\ulcorner f^*H\urcorner)=0$ by Lemma \ref{3.5}.

By \cite[Propositions 5.75 and 5.77]{km}, the trace map $\Trace_{X/Y}:
f_*\OO_Y(K_Y)\ra \OO_X(K_X)$ is an injective homomorphism, furthermore, 
it is an isomorphism over the points where $f^{-1}$ is an isomorphism. 
It follows from (\ref{es9}) that we have the following injective homomorphism, 
which is an isomorphism over the points where $f^{-1}$ is an isomorphism:
\begin{eqnarray*}
f_*\OO_Y(K_Y+\ulcorner f^*H\urcorner)\ra \OO_X(K_X+\ulcorner H\urcorner),
\end{eqnarray*}
hence the quotient sheaf $\QQ$ of $\OO_X(K_X+\ulcorner H\urcorner)$ by 
$f_*\OO_Y(K_Y+\ulcorner f^*H\urcorner)$ is supported on a finite set of 
points. By the Leray spectral sequence, we have
\begin{eqnarray*}
H^1(X,f_*\OO_Y(K_Y+\ulcorner f^*H\urcorner))
=H^1(Y,K_Y+\ulcorner f^*H\urcorner)=0,
\end{eqnarray*}
which together with $H^1(X,\QQ)=0$ implies 
$H^1(X,K_X+\ulcorner H\urcorner)=0$.
\end{proof}

There is a generalization of \cite[Theorem 1.4]{xie10}, where no 
singularity assumption is made.

\begin{thm}\label{3.7}
Let $X$ be a normal projective surface and $H$ a nef and big $\Q$-divisor 
on $X$. If $X$ is birational to a strongly liftable smooth projective 
surface $Z$, then $H^1(X,K_X+\ulcorner H\urcorner)=0$ holds.
\end{thm}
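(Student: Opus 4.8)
The plan is to reduce the assertion, via Lemma \ref{3.6}, to a vanishing statement on a strongly liftable smooth surface dominating $X$, and there to deduce the nef-and-big case from the ample case of Theorem \ref{2.8} by a small perturbation.

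First I would resolve the birational map $X\dra Z$: choose a smooth projective surface $Y$ together with birational morphisms $f:Y\ra X$ and $g:Y\ra Z$. Since $g$ is a birational morphism between smooth projective surfaces, it factors as a finite succession of blow-ups at closed points, and as each such blow-up preserves strong liftability (a point always lifts over $W_2(k)$, and the induced blow-up of the fixed lifting of $Z$ supplies the required lifting), the surface $Y$ is again strongly liftable; moreover $f:Y\ra X$ is a resolution. Put $N:=f^*H$, which is again nef and big because $f$ is birational. By Lemma \ref{3.6} it then suffices to prove $H^1(Y,K_Y+\ulcorner N\urcorner)=0$.

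Next I would replace $N$ by an ample $\Q$-divisor without altering the round-up. By Kodaira's lemma there is an effective $\Q$-divisor $E$ with $N\equiv A+E$ for some ample $\Q$-divisor $A$. After composing $f$ with a further succession of blow-ups at points (which keeps $Y$ strongly liftable and $f$ a resolution) I may assume that $Y$ is a log resolution of $\Supp N+\Supp E$, so that $\Supp N\cup\Supp E$ is simple normal crossing; here one absorbs a suitable small exceptional $\Q$-divisor into $E$ so that the ample part $A$ remains ample after pullback. For a sufficiently small rational $t>0$ set $H':=N-tE$. Then $H'\equiv(1-t)N+tA$ is ample, while subtracting the small effective divisor $tE$ lowers each coefficient so slightly that no round-up changes, whence $\ulcorner H'\urcorner=\ulcorner N\urcorner$ and $\Supp\langle H'\rangle\subseteq\Supp N\cup\Supp E$ is simple normal crossing. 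Since $Y$ is strongly liftable, the pair $(Y,D)$ with $D:=\Supp\langle H'\rangle$ admits a lifting over $W_2(k)$, so Theorem \ref{2.8} applies to the ample $\Q$-divisor $H'$ on the surface $Y$ (here $\dim Y=2$ and $\inf(2,p)=2$, giving vanishing in degrees $i>0$) and yields
\[
H^1(Y,K_Y+\ulcorner N\urcorner)=H^1(Y,K_Y+\ulcorner H'\urcorner)=0.
\]
Applying Lemma \ref{3.6} to $f:Y\ra X$ then gives $H^1(X,K_X+\ulcorner H\urcorner)=0$.

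The main obstacle I anticipate is organizing the single surface $Y$ so that it is simultaneously a resolution of $X$, strongly liftable, and a log resolution of $\Supp N+\Supp E$: the stability of strong liftability under blow-ups at points is essential here, and one must be careful that absorbing the exceptional correction into $E$ leaves the ample part $A$ ample on the blown-up surface, since the pullback of an ample divisor is only nef and big. Once this compatible model is in place, the perturbation $H'=N-tE$ and the invariance $\ulcorner H'\urcorner=\ulcorner N\urcorner$ are routine, and the result follows from Theorem \ref{2.8} and Lemma \ref{3.6}.
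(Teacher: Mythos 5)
Your argument is correct, but it routes around the paper's key citation and is in that sense genuinely different. The paper's own proof also takes a resolution $f:Y\ra X$ (chosen so that $\Supp(\{f^*H\})$ is simple normal crossing) and also invokes Kodaira's lemma, but it then finishes in one stroke: since $(Y,\{f^*H\}+B)$ is KLT and $K_Y+\ulcorner f^*H\urcorner-(K_Y+\{f^*H\}+B)=f^*H-B$ is ample, the KLT-type vanishing theorem \cite[Theorem 4.2]{xie10} --- which applies to any smooth projective surface merely \emph{birational} to a strongly liftable one, so $Y$ itself need not be strongly liftable --- gives $H^1(Y,K_Y+\ulcorner f^*H\urcorner)=0$, and Lemma \ref{3.6} concludes. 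You instead unpack that black box: you arrange for the resolution itself to be strongly liftable, perturb $f^*H$ by hand to an ample $\Q$-divisor $H'$ with the same round-up and with fractional support inside a liftable simple normal crossing divisor, and apply Theorem \ref{2.8} directly. This buys self-containedness relative to the present paper (only Theorem \ref{2.8}, Lemma \ref{3.6} and Kodaira's lemma are used), but at the price of an extra input: that strong liftability is preserved under blow-ups at closed points. That statement is true and is precisely \cite[Proposition 2.6]{xie10} (it is also what Theorem \ref{5.3}(iii) of this paper rests on), so appealing to it is legitimate; however, your parenthetical justification of it is too thin --- the nontrivial point is not lifting the blown-up point or the exceptional curve, but showing that the strict transform of every prime divisor passing through that point admits a flat lifting on the blown-up model, which is the actual content of the cited proposition and deserves a citation rather than a one-line gloss. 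Your remaining steps check out: suitable $\delta_i>0$ making $\pi^*A-\sum_i\delta_iE_i$ ample exist by standard arguments; subtracting $tE$ for small rational $t>0$ changes no round-up (on components of $E$ where $f^*H$ has integral or zero coefficient the new coefficient is slightly negative but still rounds up to the same integer, which is exactly why you correctly require all of $\Supp(f^*H)\cup\Supp E$, not just the fractional part, to be simple normal crossing); and $\inf(2,p)=2$ indeed puts $i=1$, $j=0$ within the range of Theorem \ref{2.8}.
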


\begin{proof}
Take a resolution $f:Y\ra X$ such that $\Supp(\{f^*H\})$ is simple normal 
crossing. Thus $(Y,\{f^*H\})$ is Kawamata log terminal (KLT, for short), 
and $K_Y+\ulcorner f^*H\urcorner-(K_Y+\{f^*H\})=f^*H$ is nef and big. 
By Kodaira's lemma, there exists an effective $\Q$-divisor $B$ on $Y$ 
such that $(Y,\{f^*H\}+B)$ is KLT and $K_Y+\ulcorner f^*H\urcorner-(K_Y+
\{f^*H\}+B)=f^*H-B$ is ample. By \cite[Theorem 4.2]{xie10}, we have 
$H^1(Y,K_Y+\ulcorner f^*H\urcorner)=0$, which implies 
$H^1(X,K_X+\ulcorner H\urcorner)=0$ by Lemma \ref{3.6}.
\end{proof}

There is also a generalization of \cite[Theorem 1.4]{xie09}.

\begin{cor}\label{3.8}
Let $X$ be a normal projective rational surface and $H$ a nef and big 
$\Q$-divisor on $X$. Then $H^1(X,K_X+\ulcorner H\urcorner)=0$ holds.
\end{cor}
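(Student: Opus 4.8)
The plan is to obtain Corollary \ref{3.8} as an immediate application of Theorem \ref{3.7}; the only thing to do is to produce a strongly liftable smooth projective surface to which $X$ is birational. Since $X$ is a \emph{rational} surface, by definition it is birational to $\PP^2_k$. Now $\PP^2_k$ is itself a smooth projective rational surface, and it was proved in \cite{xie10} (and recalled at the beginning of this section) that $\PP^n_k$, and more generally every smooth rational surface, is strongly liftable over $W_2(k)$. Hence the natural choice is to take $Z=\PP^2_k$ in Theorem \ref{3.7}.

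First I would fix a birational map $X\dra\PP^2_k$, which exists precisely because $X$ is rational. I would then invoke the strong liftability of $\PP^2_k$ from \cite{xie10}, so that $Z=\PP^2_k$ satisfies all the hypotheses required by Theorem \ref{3.7}: it is smooth, projective, strongly liftable, and birational to $X$. With $H$ a nef and big $\Q$-divisor on the normal projective surface $X$, Theorem \ref{3.7} then yields $H^1(X,K_X+\ulcorner H\urcorner)=0$ directly, completing the argument.

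I do not anticipate any genuine obstacle: all the substance has already been packaged into Theorem \ref{3.7} together with the strong liftability of $\PP^2_k$ established in \cite{xie10}. The one point meriting a moment's care is the selection of the auxiliary surface $Z$, which must be simultaneously birational to $X$ and known to be strongly liftable; $\PP^2_k$ fills this role for \emph{every} rational surface. Any other smooth rational surface, for instance a Hirzebruch surface, would serve equally well, since these are strongly liftable too, but $\PP^2_k$ is the most economical choice since rationality of $X$ is stated in exactly these terms.
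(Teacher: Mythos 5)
Your proposal is correct and matches the paper's proof: the paper likewise deduces the corollary by combining Theorem \ref{3.7} with the strong liftability of smooth projective rational surfaces established in \cite[Theorem 1.3]{xie10} (of which $\PP^2_k$ is a special case). Your explicit choice of $Z=\PP^2_k$ is just a concrete instance of the same argument, so there is nothing to add.
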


\begin{proof}
It follows from \cite[Theorem 1.3]{xie10} and Theorem \ref{3.7}.
\end{proof}

By means of Lemma \ref{3.6}, we can give an alternative proof of 
\cite[Theorem 5.1]{sa}.

\begin{cor}\label{3.9}
Let $X$ be a normal projective surface over an algebraically closed 
field $k$ with $\ch(k)=0$, and $H$ a nef and big $\Q$-divisor on $X$. 
Then $H^1(X,K_X+\ulcorner H\urcorner)=0$ holds.
\end{cor}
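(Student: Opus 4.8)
The plan is to reduce to the smooth case via Lemma~\ref{3.6} and then invoke the classical Kawamata--Viehweg vanishing theorem, which holds unconditionally over a field of characteristic zero. Note first that since $H$ is nef it is in particular $\Q$-Cartier, so the hypotheses of Lemma~\ref{3.6} are met. The strategy parallels that of Theorem~\ref{3.7}, but with the characteristic-$p$ input \cite[Theorem 4.2]{xie10} (which required a lifting over $W_2(k)$) replaced by the ordinary vanishing theorem.

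First I would take a resolution $f:Y\ra X$ chosen so that $\Supp(\{f^*H\})$ is simple normal crossing; such a resolution exists by Hironaka in characteristic zero. I would then observe that $f^*H$ remains nef and big on $Y$: nefness is preserved under pullback by a proper morphism, and bigness is preserved because $f$ is birational. Since the fractional part $\{f^*H\}$ has simple normal crossing support by construction, the standard Kawamata--Viehweg vanishing theorem in characteristic zero yields $H^1(Y,K_Y+\ulcorner f^*H\urcorner)=0$. If one prefers, one may first apply Kodaira's lemma to perturb the nef and big divisor $f^*H$ into an ample one, exactly as in the proof of Theorem~\ref{3.7}; but this step is unnecessary here, since the characteristic-zero vanishing applies directly to nef and big $\Q$-divisors whose fractional parts have simple normal crossing support.

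Finally, having established the vanishing $H^1(Y,K_Y+\ulcorner f^*H\urcorner)=0$ on the smooth model, I would apply Lemma~\ref{3.6} to the $\Q$-Cartier $\Q$-divisor $H$ to conclude that $H^1(X,K_X+\ulcorner H\urcorner)=0$.

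The proof presents no genuine obstacle; indeed, the whole point is that in characteristic zero the lifting hypothesis appearing in Theorem~\ref{3.7} becomes superfluous, the ordinary Kawamata--Viehweg vanishing holding without restriction, while Lemma~\ref{3.6}---valid in arbitrary characteristic---carries the statement down from $Y$ to the possibly singular surface $X$. The only points requiring a moment's care are the existence of a resolution rendering $\{f^*H\}$ simple normal crossing and the verification that $f^*H$ is still nef and big, both of which are routine.
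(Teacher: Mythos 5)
Your proposal is correct and follows essentially the same route as the paper: take a resolution $f:Y\ra X$ making the fractional part of $f^*H$ simple normal crossing, apply the characteristic-zero Kawamata--Viehweg vanishing for nef and big $\Q$-divisors (the paper cites \cite[Theorem 1-2-3]{kmm}) to get $H^1(Y,K_Y+\ulcorner f^*H\urcorner)=0$, and descend to $X$ via Lemma \ref{3.6}. Your remark that the Kodaira-lemma perturbation used in Theorem \ref{3.7} is unnecessary here matches the paper, which likewise applies the vanishing theorem directly to the nef and big divisor $f^*H$.
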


\begin{proof}
Take a resolution $f:Y\ra X$ such that $\Supp(\langle f^*H\rangle)$ 
is simple normal crossing. Since $f^*H$ is nef and big, 
by \cite[Theorem 1-2-3]{kmm}, we have 
$H^1(Y,K_Y+\ulcorner f^*H\urcorner)=0$, which implies 
$H^1(X,K_X+\ulcorner H\urcorner)=0$ by Lemma \ref{3.6}.
\end{proof}

\section{Cyclic cover trick over $W_2(k)$}\label{S4}

The cyclic cover trick is a powerful technique, which is used widely 
in algebraic geometry. The general theory of cyclic covers over a field 
of arbitrary characteristic has already been given in \cite[\S 3]{ev}. 
In this section, we shall deduce the cyclic cover trick over $W_2(k)$ 
and use it to study the behavior of cyclic covers over strongly liftable 
schemes.

\begin{thm}\label{4.1}
Let $X$ be a smooth scheme, and $\LL$ an invertible sheaf on $X$. 
Let $N$ be a positive integer prime to $p$, and $D$ an effective divisor 
on $X$ with $\LL^N=\OO_X(D)$.
\begin{itemize}
\item[(i)] Let $\cA=\bigoplus^{N-1}_{i=0}\LL^{-i}(\big[\displaystyle\frac{iD}
{N}\big])$. Then $\cA$ is an $\OO_X$-algebra. Let $Y=\Spec\cA$. Then $Y$ is 
normal and the natural projection $\pi:Y\ra X$ is a finite surjective 
morphism of degree $N$, which is called the cyclic cover obtained by taking 
the $N$-th root out of $D$. Furthermore, if $\Sing(D_{\rm red})=\emptyset$, 
then $Y$ is smooth.

\item[(ii)] If $X$ has a lifting $\wt{X}$ over $W_2(k)$, $\LL$ has a lifting 
$\wt{\LL}$ on $\wt{X}$, and $D$ has a lifting $\wt{D}$ on $\wt{X}$ with 
$\wt{\LL}^N=\OO_{\wt{X}}(\wt{D})$, then $Y$ is liftable over $W_2(k)$. 
In fact, we have an induced cyclic cover $\wt{\pi}:\wt{Y}\ra \wt{X}$, 
such that $\wt{\pi}$ is a lifting of $\pi$ over $W_2(k)$. Furthermore, 
if $\Sing(D_{\rm red})=\emptyset$, then $\wt{Y}$ is smooth over $W_2(k)$, 
and $\wt{\pi}:\wt{Y}\ra \wt{X}$ has similar properties to those of 
$\pi:Y\ra X$, i.e.\ the statements of \cite[Claim 3.13 and Lemma 3.15]{ev} 
hold for $\wt{\pi}:\wt{Y}\ra \wt{X}$, where the phrase ``nonsingular" 
should be replaced by ``smooth over $W_2(k)$".
\end{itemize}
\end{thm}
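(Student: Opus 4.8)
The plan is to prove the two parts of Theorem~\ref{4.1} in tandem, treating part (i) over the base field $k$ and part (ii) over $W_2(k)$ by the same construction, which is why the statement says the proof carries over after replacing ``nonsingular'' with ``smooth over $W_2(k)$''. For part (i), I would first verify that $\cA=\bigoplus^{N-1}_{i=0}\LL^{-i}([iD/N])$ carries an $\OO_X$-algebra structure. The multiplication maps $\LL^{-i}([iD/N])\otimes\LL^{-j}([jD/N])\ra\LL^{-(i+j)}([(i+j)D/N])$ are built from the natural inclusions $[iD/N]+[jD/N]\leq[(i+j)D/N]$ together with the section $s\in H^0(X,\LL^N)=H^0(X,\OO_X(D))$ cutting out $D$, which is used to collapse indices $i+j\geq N$ back into the range $0,\dots,N-1$ (multiplying by $s$ and twisting by $\LL^{-N}=\OO_X(-D)$). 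This is exactly the construction of \cite[\S 3]{ev}, so I would invoke that reference for the algebra structure, the normality of $Y=\Spec\cA$, and the fact that $\pi:Y\ra X$ is finite surjective of degree $N$; the hypothesis that $N$ is prime to $p$ guarantees the cover is separable and tamely ramified, which is what makes the local computation work.

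The smoothness claim under $\Sing(D_{\rm red})=\emptyset$ is the geometric heart of part (i). Here I would argue locally: near a point of $X$ where $D$ is not supported, $\pi$ is \'etale (being an $N$-th root of a unit, with $N$ prime to $p$), so $Y$ is smooth there. Near a point of $\Supp(D)$, the condition $\Sing(D_{\rm red})=\emptyset$ means $D_{\rm red}$ is smooth, so after choosing a local equation I may assume $D=mE$ for a single smooth prime divisor $E$ with local equation $x_1$, and $\LL$ trivial locally; the cover is then locally $\Spec\OO_X[z]/(z^N-x_1^m)$. Writing $d=\gcd(m,N)$ and using $N$ prime to $p$, the normalization is smooth because the relevant local ring is a tame cyclic extension with a monomial branch divisor; this is precisely \cite[Claim 3.13 and Lemma 3.15]{ev}. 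I expect this local normal-form analysis to be the main obstacle, since one must track how $[iD/N]$ interacts with the single smooth branch divisor, but it reduces to a standard Kummer computation.

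For part (ii), the strategy is that the entire construction is functorial and commutes with base change, so I would reuse the same formula over $W_2(k)$. Given the liftings $\wt{X}$, $\wt{\LL}$ and $\wt{D}$ with $\wt{\LL}^N=\OO_{\wt{X}}(\wt{D})$, I set $\wt{\cA}=\bigoplus^{N-1}_{i=0}\wt{\LL}^{-i}([i\wt{D}/N])$ and $\wt{Y}=\Spec\wt{\cA}$, with the algebra structure defined by the lifted section $\wt{s}\in H^0(\wt{X},\wt{\LL}^N)=H^0(\wt{X},\OO_{\wt{X}}(\wt{D}))$ cutting out $\wt{D}$ exactly as before. Because $\wt{D}$ is flat over $W_2(k)$ and restricts to $D$ over $k$, and because the rounding $[i\wt{D}/N]$ is computed from the same combinatorial data as $[iD/N]$, base change along $\spec k\hookrightarrow\spec W_2(k)$ recovers $\cA$; hence $\wt{Y}\times_{\spec W_2(k)}\spec k=Y$ and $\wt{\pi}$ is a lifting of $\pi$. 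The key point to check is flatness of $\wt{Y}$ over $W_2(k)$, which follows because each $\wt{\LL}^{-i}([i\wt{D}/N])$ is a locally free $\OO_{\wt{X}}$-module and $\OO_{\wt{X}}$ is flat over $W_2(k)$, so $\wt{\cA}$ is a flat $W_2(k)$-algebra.

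Finally, for the smoothness of $\wt{Y}$ over $W_2(k)$ when $\Sing(D_{\rm red})=\emptyset$, I would redo the local Kummer computation with $W_2(k)$ as the base ring: over the smooth locus of $\wt{D}$ the cover is locally $\Spec\OO_{\wt{X}}[z]/(z^N-\wt{x}_1^m)$ for a lifted local equation $\wt{x}_1$ of the single branch divisor, and since $N$ is prime to $p$ this relative curve over $W_2(k)$ has smooth total space by the same tame Kummer argument, now read relatively over $W_2(k)$. The criterion for relative smoothness I would use is the one already established in the paper, namely Lemma~\ref{2.2} applied to the pair $(\wt{Y},\wt{\pi}^*\wt{D}_{\rm red})$ after checking that the reduced branch locus upstairs lifts to a relatively simple normal crossing divisor; this also yields the analogues of \cite[Claim 3.13 and Lemma 3.15]{ev} in the relative setting, with ``nonsingular'' upgraded to ``smooth over $W_2(k)$''. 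I expect no new obstacle here beyond bookkeeping, since every ingredient---flat base change, the tame Kummer normal form, and the relative smoothness criterion of Lemma~\ref{2.2}---is already available.
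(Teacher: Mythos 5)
Your proposal is correct and takes essentially the same approach as the paper: define $\wt{\cA}=\bigoplus_{i=0}^{N-1}\wt{\LL}^{-i}(\big[i\wt{D}/N\big])$ and $\wt{Y}=\Spec\wt{\cA}$, deduce flatness over $W_2(k)$ from local freeness of $\wt{\cA}$, note that base change to $k$ recovers $Y$ so $\wt{\pi}$ lifts $\pi$, and obtain smoothness of $\wt{Y}$ and the analogues of \cite[Claim 3.13 and Lemma 3.15]{ev} via Lemma \ref{2.2} plus the tame local Kummer computation. The only cosmetic difference is that the paper applies Lemma \ref{2.2} directly to $\wt{Y}$ as a flat lifting of the smooth scheme $Y$ and locally factors $\wt{\pi}$ into an \'etale cover of degree $\gcd(N,\alpha_1)$ followed by a ramified cover, whereas you re-run the Kummer normal form relative to $W_2(k)$; these are the same argument in substance.
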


\begin{proof}
(i) The construction and the properties of the cyclic cover $\pi:Y\ra X$ 
described as above have already been given in \cite[\S 3]{ev}.

(ii) Let $\wt{\cA}=\bigoplus^{N-1}_{i=0}\wt{\LL}^{-i}(\big[\displaystyle
\frac{i\wt{D}}{N}\big])$. By using the homomorphism $\wt{s}:\OO_{\wt{X}}
\ra \OO_{\wt{X}}(\wt{D})=\wt{\LL}^N$, we can prove that $\wt{\cA}$ 
is an $\OO_{\wt{X}}$-algebra. Let $\wt{Y}=\Spec\wt{\cA}$. Then 
$\wt{Y}\times_{\spec W_2(k)}\spec k=Y$. Since $\wt{\cA}$ is a locally 
free sheaf on $\wt{X}$ and $\wt{X}$ is smooth over $W_2(k)$ by Lemma 
\ref{2.2}, $\wt{Y}$ is flat over $W_2(k)$. Thus $\wt{Y}$ is a lifting 
of $Y$ over $W_2(k)$. Let $\wt{\pi}:\wt{Y}\ra\wt{X}$ be the natural 
projection. Then we have the following cartesian square:
\[
\xymatrix{
Y \ar[d]_{\pi} \ar@{^{(}->}[r]^\iota & \wt{Y} \ar[d]^{\wt{\pi}} \\
X \ar@{^{(}->}[r]^\iota & \wt{X}, 
}
\]
which implies that $\wt{\pi}$ is a lifting of $\pi$ over $W_2(k)$. 

If $\Sing(D_{\rm red})=\emptyset$, then $Y$ is smooth. By Lemma \ref{2.2}, 
$\wt{Y}$ is smooth over $W_2(k)$. In fact, the smoothness of $\wt{Y}$ 
can also be proved by a local calculation. Since $\Sing(D_{\rm red})=
\emptyset$, by Lemma \ref{2.2}, the irreducible components of $\wt{D}$ 
are disjoint and smooth over $W_2(k)$. Since the statements of 
\cite[Claim 3.13 and Lemma 3.15]{ev} (replacing ``nonsingular" by 
``smooth over $W_2(k)$") are local problems, we may assume that $X=\spec B$, 
$D=\alpha_1 D_1$, $\wt{X}=\spec\wt{B}$ and $\wt{D}=\alpha_1 \wt{D}_1$. 
We can factorize $\wt{\pi}:\wt{Y}\ra \wt{X}$ into two parts: one is an 
\'etale cover $\wt{\pi}_1:\wt{Z}\ra \wt{X}$ of degree $\gcd(N,\alpha_1)$, 
and the other is a ramified cover $\wt{\pi}_2:\wt{Y}\ra \wt{Z}$ of degree 
$N/\gcd(N,\alpha_1)$, hence the properties of $\wt{\pi}:\wt{Y}\ra \wt{X}$ 
follow from an almost identical local calculation to that of 
\cite[Claim 3.13]{ev}.
\end{proof}

First of all, we give a remark on Theorem \ref{4.1}.

\begin{rem}\label{4.2}
In Theorem \ref{4.1}(ii), if $D$ cannot be lifted over $W_2(k)$, then 
$Y$ is not necessarily liftable over $W_2(k)$. Such an example has been 
given in \cite{ra} and was generalized in \cite[Theorem 3.6]{xie07}. 
More precisely, given a Raynaud-Tango curve $C$, there are a $\PP^1$-bundle 
$f:X\ra C$ and an invertible sheaf $\LL=\OO_X(M)$ on $X$ such that 
$\LL^N=\OO_X(D)$ with $D=E+C'$, where $X,E,C',M$ are constructed as in 
\cite[Theorem 3.6]{xie07}. By \cite[Corollary 4.2]{xie09}, $X$ has a lifting 
$\wt{X}$ over $W_2(k)$. Since $H^2(X,\OO_X)=0$, by the exact sequence 
(\ref{es5}), $\LL$ has a lifting $\wt{\LL}$ on $\wt{X}$. However, the 
divisor $D$ cannot be lifted over $W_2(k)$ (see \cite[Corollary 1.10]{xie09} 
for the reason). Let $\pi:Y\ra X$ be the cyclic cover obtained by taking 
the $N$-th root out of $D$. Then $Y$ is a smooth projective surface and 
there exists a counterexample to the Kodaira vanishing theorem on $Y$ 
(see \cite[Theorem 3.6]{xie07} for the proof). Hence $Y$ is not liftable 
over $W_2(k)$ by \cite[Corollaire 2.8]{di}.
\end{rem}

\begin{cor}\label{4.3}
Let $X$ be a smooth projective variety, and $\LL$ an invertible sheaf on $X$. 
Let $N$ be a positive integer prime to $p$, and $D$ an effective divisor on 
$X$ with $\LL^N=\OO_X(D)$. Let $\pi:Y\ra X$ be the cyclic cover obtained by 
taking the $N$-th root out of $D$. If $X$ is strongly liftable over $W_2(k)$, 
$H^1(X,\LL^N)=0$ and $\Sing(D_{\rm red})=\emptyset$, then $Y$ is a smooth 
projective scheme which is liftable over $W_2(k)$.
\end{cor}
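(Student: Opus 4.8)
The plan is to reduce the statement to an application of Theorem \ref{4.1}(ii). That part produces a smooth lifting $\wt Y$ of the cyclic cover as soon as one has, on a fixed lifting $\wt X$ of $X$, a lifting $\wt\LL$ of $\LL$ and a lifting $\wt D$ of $D$ satisfying $\wt\LL^N=\OO_{\wt X}(\wt D)$, together with the hypothesis $\Sing(D_{\rm red})=\emptyset$, which is given. Three of these ingredients are immediate: since $X$ is strongly liftable, Definition \ref{2.4} furnishes a lifting $\wt X$ over $W_2(k)$ that works for all prime divisors, and by Lemma \ref{2.6} the reduction map $r:H^1(\wt X,\OO^*_{\wt X})\ra H^1(X,\OO^*_X)$ is surjective, so the invertible sheaf $\LL$ admits a lifting $\wt\LL$ on $\wt X$. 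The whole problem is therefore to produce a lifting $\wt D$ of $D$ compatible with $\wt\LL^N$, and this is where the cohomological hypothesis $H^1(X,\LL^N)=0$ enters.

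For the construction of $\wt D$, I would first record that the isomorphism $\LL^N\cong\OO_X(D)$ provides a global section $s\in H^0(X,\LL^N)$ whose divisor of zeros is $\divisor_0(s)=D$. Tensoring the exact sequence (\ref{es3}) by the lifting $\wt\LL^N$ of $\LL^N$ gives a short exact sequence $0\ra\LL^N\stackrel{p}{\ra}\wt\LL^N\stackrel{r}{\ra}\LL^N\ra 0$, whose associated long exact sequence contains $H^0(\wt X,\wt\LL^N)\stackrel{r}{\ra}H^0(X,\LL^N)\ra H^1(X,\LL^N)$. Since $H^1(X,\LL^N)=0$ by hypothesis, the reduction map on global sections is surjective, so $s$ lifts to a section $\wt s\in H^0(\wt X,\wt\LL^N)$ with $r(\wt s)=s$. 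I would then set $\wt D=\divisor_0(\wt s)$, exactly as in the proof of Theorem \ref{2.3}.

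It remains to check that $\wt D$ is a genuine lifting of $D$ and that $\OO_{\wt X}(\wt D)=\wt\LL^N$; the latter is built into the construction, since $\wt s$ is a section of $\wt\LL^N$, and $\wt D$ reduces to $D$ because $r(\wt s)=s$. For flatness over $W_2(k)$, I would use that $\wt D$ is an effective Cartier divisor, so its ideal sheaf is the invertible sheaf $\wt\LL^{-N}$, hence locally free and flat over $W_2(k)$; applying $-\otimes_{W_2(k)}k$ to $0\ra\wt\LL^{-N}\ra\OO_{\wt X}\ra\OO_{\wt D}\ra 0$ and using the long exact sequence of $\tor^{W_2(k)}_*(-,k)$ together with the injectivity of $\II_D\ra\OO_X$ shows $\tor^{W_2(k)}_1(\OO_{\wt D},k)=0$, i.e.\ $\OO_{\wt D}$ is flat. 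With $\wt X$, $\wt\LL$ and $\wt D$ in hand, Theorem \ref{4.1}(ii) applies: because $\Sing(D_{\rm red})=\emptyset$, the cover $Y$ is smooth by Theorem \ref{4.1}(i) and projective since $\pi$ is finite over the projective $X$, while the induced cyclic cover $\wt Y\ra\wt X$ is smooth over $W_2(k)$, exhibiting $Y$ as liftable. The only real obstacle is the section-lifting step, and the hypothesis $H^1(X,\LL^N)=0$ is precisely what removes it.
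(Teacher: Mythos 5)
Your proposal is correct and follows essentially the same route as the paper's proof: take the lifting $\wt{X}$ from strong liftability, lift $\LL$ to $\wt{\LL}$ via Lemma \ref{2.6}, use $H^1(X,\LL^N)=0$ together with the exact sequence (\ref{es7}) to lift the defining section $s$ of $D$ to $\wt{s}\in H^0(\wt{X},\wt{\LL}^N)$, set $\wt{D}=\divisor_0(\wt{s})$, and apply Theorem \ref{4.1}(ii). Your additional verification that $\wt{D}$ is flat over $W_2(k)$ (via the $\tor$ argument) is a detail the paper leaves implicit; it does not change the argument.
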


\begin{proof}
Since $X$ is strongly liftable, we can take a lifting $\wt{X}$ of 
$X$ as in Definition \ref{2.4}. By Lemma \ref{2.6}, $\LL$ has a lifting 
$\wt{\LL}$ on $\wt{X}$. Since $H^1(X,\LL^N)=0$, by the exact sequence 
(\ref{es7}), the natural map $r:H^0(\wt{X},\wt{\LL}^N)\ra H^0(X,\LL^N)$ 
is surjective. Thus the section $s\in H^0(X,\LL^N)$ corresponding to the 
effective divisor $D$ has a lifting $\wt{s}\in H^0(\wt{X},\wt{\LL}^N)$, 
which corresponds to an effective Cartier divisor $\wt{D}$ on $\wt{X}$ 
lifting $D$. Now, the conclusion follows from Theorem \ref{4.1}(ii).
\end{proof}

\begin{cor}\label{4.4}
Let $X$ be a smooth projective toric variety, and $\LL$ an invertible sheaf 
on $X$. Let $N$ be a positive integer prime to $p$, and $D$ an effective 
divisor on $X$ with $\LL^N=\OO_X(D)$ and $\Sing(D_{\rm red})=\emptyset$. 
Let $\pi:Y\ra X$ be the cyclic cover obtained by taking the $N$-th root 
out of $D$. Then $Y$ is a smooth projective scheme which is liftable over 
$W_2(k)$.
\end{cor}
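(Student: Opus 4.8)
The plan is to deduce the statement from Theorem~\ref{4.1}(ii) by lifting the whole triple $(X,\LL,D)$ compatibly over $W_2(k)$. One is tempted to quote Corollary~\ref{4.3} verbatim, but that corollary asks for $H^1(X,\LL^N)=0$, and this can fail even though $D$ is effective (already on a Hirzebruch surface one can arrange $\LL^N=\OO_X(D)$ with $D$ effective and $\Sing(D_{\rm red})=\emptyset$, yet $H^1(X,\LL^N)\neq 0$). Hence the real task is to produce a lifting $\wt{D}$ of $D$ with $\wt{\LL}^N=\OO_{\wt{X}}(\wt{D})$ without appealing to a vanishing theorem, using instead the explicit toric description of global sections.

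First I would put $\wt{X}=X(\Delta,W_2(k))$, which by the proof of Theorem~\ref{3.2} is a lifting of $X$ over $W_2(k)$ and makes $X$ strongly liftable. Using the exact sequence $0\ra M\ra\Div_T(X)\ra\Pic(X)\ra 0$, I would represent $\LL$ by a torus invariant divisor $D_0$, so that $\LL\cong\OO_X(D_0)$ admits the lifting $\wt{\LL}=\OO_{\wt{X}}(\wt{D}_0)$, where $\wt{D}_0$ is the torus invariant divisor on $\wt{X}$ attached to the same combinatorial datum $\{u(\sigma)\}$. Then $\wt{\LL}^N=\OO_{\wt{X}}(N\wt{D}_0)$ and $\LL^N\cong\OO_X(ND_0)$, and $N\wt{D}_0$ is exactly the torus invariant lift of the torus invariant divisor $ND_0$.

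The key step is then the section count from the proof of Theorem~\ref{3.2}: writing $P=P_{ND_0}$ for the associated polytope in $M_\R$, one has $H^0(X,ND_0)=\bigoplus_{u\in P\cap M}k\cdot\chi^u$ and $H^0(\wt{X},N\wt{D}_0)=\bigoplus_{u\in P\cap M}W_2(k)\cdot\chi^u$ over the same index set, so the reduction map $r$ is manifestly surjective. Transporting this through the isomorphisms $\LL^N\cong\OO_X(ND_0)$ and $\wt{\LL}^N\cong\OO_{\wt{X}}(N\wt{D}_0)$, I conclude that $r\colon H^0(\wt{X},\wt{\LL}^N)\ra H^0(X,\LL^N)$ is surjective. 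In particular the section $s$ cutting out the given divisor $D$ has a lift $\wt{s}\in H^0(\wt{X},\wt{\LL}^N)$, and I set $\wt{D}=\divisor_0(\wt{s})$, a lifting of $D$ satisfying $\wt{\LL}^N=\OO_{\wt{X}}(\wt{D})$.

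Finally I would invoke Theorem~\ref{4.1}(ii) on the triple $(\wt{X},\wt{\LL},\wt{D})$: since $\Sing(D_{\rm red})=\emptyset$, it produces a cyclic cover $\wt{\pi}\colon\wt{Y}\ra\wt{X}$ with $\wt{Y}$ smooth over $W_2(k)$ and $\wt{Y}\times_{\spec W_2(k)}\spec k=Y$, whence $Y$ is smooth projective and liftable over $W_2(k)$. I expect the only genuine obstacle to sit in the surjectivity of $r$ on $H^0(X,\LL^N)$; this is precisely where Corollary~\ref{4.3} needed $H^1(X,\LL^N)=0$, and the toric section formula is what lets that hypothesis be dropped.
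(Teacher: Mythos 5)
Your proof is correct and follows essentially the same route as the paper's: lift $X$ as $\wt{X}=X(\Delta,W_2(k))$, lift $\LL$ via a torus invariant divisor, use the polytope description of sections to get surjectivity of $r\colon H^0(\wt{X},\wt{\LL}^N)\ra H^0(X,\LL^N)$, lift the section cutting out $D$, and apply Theorem~\ref{4.1}(ii). The only difference is that the paper compresses your middle paragraphs into the phrase ``by a similar argument to the proof of Theorem~\ref{3.2}''; your version spells out that argument (and correctly explains why Corollary~\ref{4.3} cannot simply be quoted, since $H^1(X,\LL^N)=0$ may fail).
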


\begin{proof}
Assume $X=X(\Delta,k)$. Then $\wt{X}=X(\Delta,W_2(k))$ is a lifting of 
$X$. By a similar argument to the proof of Theorem \ref{3.2}, we can 
take a lifting $\wt{\LL}$ of $\LL$ such that the natural map 
$r:H^0(\wt{X},\wt{\LL}^N)\ra H^0(X,\LL^N)$ is surjective. 
Thus the section $s\in H^0(X,\LL^N)$ corresponding to the 
effective divisor $D$ has a lifting $\wt{s}\in H^0(\wt{X},\wt{\LL}^N)$, 
which corresponds to an effective Cartier divisor $\wt{D}$ on $\wt{X}$ 
lifting $D$. Now, the conclusion follows from Theorem \ref{4.1}(ii).
\end{proof}

By using Corollary \ref{4.4}, we can construct a large class of liftable 
smooth projective varieties obtained by taking cyclic covers over toric 
varieties. In particular, there do exist many liftable smooth projective 
varieties of general type. We give the following easy examples to illustrate 
this idea.

\begin{ex}\label{4.5}
(i) Let $X=\PP(\OO\oplus\OO(-n))$ be the Hirzebruch surface over $\PP^1_k$, 
where $n\geq 0$ is an even number and $p\geq 3$. Then there are two natural 
sections $E_1,E_2$ of $f:X\ra \PP^1$ such that $E_2\sim E_1+nF$, $E_1^2=-n$, 
$E_2^2=n$ and $E_1\cap E_2=\emptyset$, where $F$ is the fiber of $f$. Let 
$\LL=\OO_X(E_1+\frac{n}{2}F)$. Then $\LL^2=\OO_X(E_1+E_2)$. Let $\pi:Y\ra X$ 
be the cyclic cover obtained by taking the square root out of $E_1+E_2$. Since 
$X$ is a toric variety and $\Sing(E_1+E_2)=\emptyset$, by Corollary \ref{4.4}, 
$Y$ is a liftable smooth projective surface.

(ii) Let $X=\PP^n_k$, $\LL=\OO_X(1)$ and $N$ a positive integer such that 
$n\geq 2$, $(N,p)=1$ and $N>n+2$. Let $H$ be a general element in the linear 
system of $\OO_X(N)$. Then $H$ is a smooth irreducible hypersurface of degree 
$N$ in $X$ with $\LL^N=\OO_X(H)$. Let $\pi:Y\ra X$ be the cyclic cover 
obtained by taking the $N$-th root out of $H$. Then by Corollary 4.4, 
$Y$ is a liftable smooth projective variety. By Hurwitz's formula, we have 
$K_Y=\pi^*(K_X+\frac{N-1}{N}H)$. Since the degree of $K_X+\frac{N-1}{N}H$ is 
$N-(n+2)>0$, $K_Y$ is an ample divisor on $Y$, hence $Y$ is of general type.
\end{ex}

Finally, we give a remark on Corollary \ref{4.3}.

\begin{rem}\label{4.6}
With the same notation and assumptions as in Corollary \ref{4.3}, 
although $Y$ is liftable over $W_2(k)$, in general, we cannot hope 
that $Y$ is strongly liftable over $W_2(k)$. The following local 
example given by Illusie shows that it is a quite subtle problem to 
determine whether $Y$ is strongly liftable or not.

Let $X=\A^2_k=\spec k[x,y]$, $D$ the line $x+1=0$ and $\pi:Y\ra X$ 
the cyclic cover obtained by taking the square root out of $D$. 
Let $\wt{X}=\A^2_{W_2(k)}=\spec W_2(k)[x,y]$, $\wt{D}$ the line $x+1=0$ 
and $\wt{\pi}:\wt{Y}\ra \wt{X}$ the cyclic cover obtained by taking 
the square root out of $\wt{D}$. It is easy to see that $\wt{\pi}:\wt{Y}\ra 
\wt{X}$ is a lifting of $\pi:Y\ra X$.

Let $E$ be the nodal cubic curve $y^2=x^2(x+1)$ in $X$. Then $\pi^*(E)$ 
consists of two irreducible components $E_1,E_2$, which are smooth and 
intersect transversally. More precisely, let $t^2=x+1$, then $E_1,E_2$ 
are defined by $y=\pm t(t^2-1)$ respectively. Thus both $E_1$ and $E_2$ 
are isomorphic to the normalization of $E$ and map onto $E$ through $\pi$. 
On the other hand, take a lifting $\wt{E}\subset \wt{X}$ of $E$, which is 
defined by $y^2=x(x+1)(x+p)$. Then $\wt{E}_{12}=\wt{\pi}^*(\wt{E})$ is an 
irreducible divisor on $\wt{Y}$ defined by $y^2=(t^2-1)t^2(t^2+p-1)$. Since 
$\wt{E}_{12}$ is not relatively simple normal crossing over $W_2(k)$, by 
Lemma \ref{2.2}, it can never be a lifting of $E_1$ or $E_2$ or $E_1+E_2$.
\[
\xymatrix{
E_1+E_2 \ar[d]_{\pi} \ar@{.>}[r]|\times & \wt{E}_{12} \ar[d]^{\wt{\pi}} \\
E \ar@{^{(}->}[r] & \wt{E}
}
\]

For example, if we would like to lift $E_1\subset Y$, then first we need to 
lift $E=\pi_*(E_1)\subset X$. If $E$ has a unique lifting, e.g.\ $\wt{E}$ 
defined as above, then $\wt{E}_{12}=\wt{\pi}^*(\wt{E})$ is not a lifting of 
$E_1$. Hence $E_1$ cannot be lifted over $W_2(k)$, which shows that the 
strong liftability of $X$ is not sufficient to ensure that the cyclic cover 
$Y$ is also strongly liftable.
\end{rem}

\section{Some corrections}\label{S5}

In this section, we will give some corrections to the mistakes in 
\cite{xie10}. First of all, the necessity of \cite[Proposition 3.4]{xie10} 
does not hold in general (see Remark \ref{2.7} for the reason), and it 
has already been changed into Proposition \ref{2.5}.

\begin{rem}\label{5.1}
If $C$ is a Tango curve, then there is a decomposable locally free sheaf 
$\FF$ of rank two on $C$ such that $Z=\PP(\FF)$ is not strongly liftable 
over $W_2(k)$. We use the same notation and construction as in 
\cite[Theorem 3.1]{xie07}. Let $\FF=\OO_C\oplus\LL$. Then $\FF\cong\EE$ on 
a dense open subset of $C$, hence $Z=\PP(\FF)$ is birational to $X=\PP(\EE)$. 
Assume the contrary that $Z$ is strongly liftable. Then by 
\cite[Theorem 4.2]{xie10}, the Kawamata-Viehweg vanishing theorem 
should hold on $X$, which is a contradiction. This example shows that 
\cite[Proposition 3.8]{xie10} does not hold in general. In addition, 
the conclusion in \cite[Remark 3.9]{xie10} does not hold yet in general. 
Thus we change \cite[Proposition 3.8]{xie10} into the following statement.
\end{rem}

\begin{prop}\label{5.2}
Let $C=\PP^1_k$, $\GG$ an invertible sheaf on $C$, and $\EE=\OO_C\oplus\GG$. 
Let $X=\PP(\EE)$ and $f:X\ra C$ the natural projection. Then $X$ is strongly 
liftable over $W_2(k)$.
\end{prop}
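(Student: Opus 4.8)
The plan is to verify the two sufficient conditions of Proposition~\ref{2.5} for the Hirzebruch-type surface $X=\PP(\EE)$ with $\EE=\OO_C\oplus\GG$ over $C=\PP^1_k$, using the explicit lifting $\wt{X}=\PP(\wt{\EE})$ where $\wt{\EE}=\OO_{\wt{C}}\oplus\wt{\GG}$ is the obvious lift over $\wt{C}=\PP^1_{W_2(k)}$ (such $\wt{C}$ and $\wt{\GG}$ exist since $\PP^1$ and all invertible sheaves on it lift canonically). First I would record that $X$ is in fact a smooth \emph{toric} surface: since $\GG=\OO_C(m)$ for some $m$, the projectivization $\PP(\OO_C\oplus\OO_C(m))$ is a Hirzebruch surface $\F_m$, which carries a torus action. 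This immediately suggests that Proposition~\ref{5.2} is a special case of Theorem~\ref{3.2}, and indeed the cleanest route is simply to invoke Theorem~\ref{3.2}. However, since the point of~\S\ref{S5} is to correct~\cite{xie10} and presumably to give a self-contained verification in the ruled-surface language, I would instead check conditions (i) and (ii) directly.

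For condition~(i), I would compute $H^2(X,\OO_X)$ and show it vanishes, which forces $r\colon H^1(\wt{X},\OO^*_{\wt{X}})\ra H^1(X,\OO^*_X)$ to be surjective via the exact sequence~(\ref{es5}). Since $f\colon X\ra C$ is a $\PP^1$-bundle, the Leray spectral sequence together with $R^qf_*\OO_X=0$ for $q>0$ and $H^q(C,\OO_C)=0$ for $q>0$ (as $C=\PP^1$) gives $H^2(X,\OO_X)=0$. Alternatively, I can build the lifting of $\OO_X(D)$ for a prime divisor $D$ by hand: the Picard group of $X$ is freely generated by a fiber class $F$ and a section class, both of which lift tautologically to $\wt{X}=\PP(\wt{\EE})$ (the relative $\OO(1)$ and the pullback $f^*\OO_C(1)$ both lift), so every invertible sheaf on $X$ lifts to $\wt{X}$, which already gives~(i) by Lemma~\ref{2.6}'s argument.

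The substantive step is condition~(ii): for every prime divisor $D$ on $X$, producing a lifting $\wt{\LL}_D$ of $\LL_D=\OO_X(D)$ with $r_D\colon H^0(\wt{X},\wt{\LL}_D)\ra H^0(X,\LL_D)$ surjective. By the exact sequence~(\ref{es7}) it suffices to arrange $H^1(X,\LL_D)=0$ for the chosen lift, or more robustly to exhibit $\wt{D}$ itself as a lift of $D$ and read off surjectivity. The natural strategy is to push the divisor down: write $\LL_D\cong\OO_X(aS+bF)$ in terms of a section $S$ and fiber $F$, and use the pushforward description $f_*\OO_X(aS)=\mathrm{Sym}^a\EE^\vee$-type formulas so that $H^0(X,\LL_D)$ is identified with global sections of a direct sum of line bundles on $C=\PP^1$. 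Over $\PP^1$ the relevant $H^1$ groups vanish in the right range, so the lift $\wt{\LL}_D=\OO_{\wt{X}}(a\wt{S}+b\wt{F})$ has $H^0(\wt{X},\wt{\LL}_D)$ computed by the identical formula over $W_2(k)$, and surjectivity of $r_D$ follows because sections of $\OO_{\PP^1}(n)$ lift freely from $k$ to $W_2(k)$ coefficientwise (exactly as in the toric computation in Theorem~\ref{3.2}).

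The main obstacle will be the bookkeeping in condition~(ii): an arbitrary prime divisor $D$ need not be a section or a fiber, so I must control its class $aS+bF$ and confirm the vanishing $H^1(X,\LL_D)=0$, or else lift $D$ as an honest effective divisor and show the associated $r_D$ is surjective without relying on cohomology vanishing. The $\PP^1$-bundle structure is what makes this tractable, since it reduces every cohomological question on $X$ to one on $\PP^1$ where everything lifts coefficientwise; the genus-zero hypothesis on $C$ is essential here, as Remark~\ref{2.7} shows the analogous surjectivity can fail when $C$ has positive genus. I expect the cleanest writeup to either cite Theorem~\ref{3.2} outright or to mirror its toric section-lifting argument, so the real work is organizational rather than conceptual.
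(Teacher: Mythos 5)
Your proposal is correct and takes essentially the same approach as the paper: the paper's main argument is precisely your pushforward computation over $C=\PP^1_k$ (following the proof of Proposition 3.8 of \cite{xie10}, checking surjectivity of $r_D$ summand by summand on $\GG^i\otimes\HH$, where for each summand either $H^0$ vanishes for degree reasons or $H^1$ vanishes since $C$ has genus zero, so that the exact sequence (\ref{es7}) applies), and the paper also records your ``cleanest route'' --- that $X$ is a Hirzebruch surface, hence toric, hence strongly liftable by Theorem \ref{3.2} --- as an explicit alternative proof.
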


\begin{proof}
We proceed the same argument as in the proof of \cite[Proposition 3.8]{xie10}. 
Finally, we have only to show that each $\pi^i_D:H^0(\wt{C},\wt{\GG}^i
\otimes\wt{\HH})\ra H^0(C,\GG^i\otimes\HH)$ is surjective. 
If $\deg(\GG^i\otimes\HH)<0$, then $H^0(C,\GG^i\otimes\HH)=0$, hence $\pi^i_D$ 
is surjective. If $\deg(\GG^i\otimes\HH)\geq 0$, then $H^1(C,\GG^i\otimes\HH)
=0$ by Serre's duality, hence by the exact sequence (\ref{es7}), $\pi^i_D:H^0
(\wt{C},\wt{\GG}^i\otimes\wt{\HH})\ra H^0(C,\GG^i\otimes\HH)$ is surjective.

In fact, we have an alternative proof. Since $X$ is a Hirzebruch surface, 
$X$ is toric, hence is strongly liftable by Theorem \ref{3.2}.
\end{proof}

Finally, we change \cite[Theorem 1.3]{xie10} into the following statement.

\begin{thm}\label{5.3}
The following schemes are strongly liftable:
\begin{itemize}
\item[(i)] $\A^n_k$, $\PP^n_k$ and a smooth projective curve;

\item[(ii)] a smooth projective variety of Picard number 1 
which is a complete intersection in $\PP^n_k$;

\item[(iii)] a smooth projective rational surface.
\end{itemize}
\end{thm}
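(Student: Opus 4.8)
The plan is to establish each of the three classes separately, reducing (ii) and (iii) to the sufficient condition in Proposition~\ref{2.5}, namely surjectivity of $r:H^1(\wt{X},\OO_{\wt{X}}^*)\ra H^1(X,\OO_X^*)$ together with liftability of sections of each $\LL_D=\OO_X(D)$. For part~(i), the cases $\A^n_k$ and $\PP^n_k$ are handled directly: over affine space every prime divisor is principal and the ambient scheme lifts trivially to $\A^n_{W_2(k)}$, while for $\PP^n_k$ one lifts to $\PP^n_{W_2(k)}$, uses $H^2(\PP^n_k,\OO)=0$ and $H^1(\PP^n_k,\OO(m))=0$ to verify both conditions of Proposition~\ref{2.5}. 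For a smooth projective curve $C$ of genus $g$, I would lift $C$ to some $\wt{C}$ (curves are unobstructed), note that $\Pic$ lifts since $H^2(C,\OO_C)=0$, and verify that for a prime divisor $D=\sum n_iP_i$ the relevant $H^0$ lifts once one passes to a line bundle of large enough degree; the genus-$0$ case is of course $\PP^1_k$ from the previous step.

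For part~(ii), let $X\subset\PP^n_k$ be a smooth complete intersection of Picard number one. The key point is that $\Pic(X)=\Z\cdot\OO_X(1)$, so every prime divisor $D$ is linearly equivalent to a hypersurface section $mX\cap H$, and $\LL_D\cong\OO_X(m)$ for some $m$. First I would lift $X$ to a complete intersection $\wt{X}\subset\PP^n_{W_2(k)}$ cut out by lifts of the defining equations; flatness over $W_2(k)$ follows because the Hilbert polynomial is preserved, and $\wt{X}$ is smooth over $W_2(k)$ by Lemma~\ref{2.2} applied to local equations (or directly via the Jacobian criterion mod $p^2$). The surjectivity of $r$ on $H^1(\OO^*)$ reduces, via the exact sequence~(\ref{es5}), to $H^2(X,\OO_X)$ not obstructing the class $\OO_X(1)$; since $\OO_{\PP^n_{W_2(k)}}(1)$ restricts to a lift of $\OO_X(1)$ and $\Pic(X)$ is generated by this class, every line bundle lifts. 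Liftability of sections then follows from surjectivity of $H^0(\wt{X},\OO_{\wt{X}}(m))\ra H^0(X,\OO_X(m))$, which holds once $H^1(X,\OO_X(m))=0$; for complete intersections these intermediate cohomology groups vanish in the relevant range, so condition~(ii) of Proposition~\ref{2.5} is met.

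For part~(iii), a smooth projective rational surface $S$, I would argue by descending from a factorization of the birational structure. A rational surface is obtained from a minimal rational surface---either $\PP^2_k$ or a Hirzebruch surface $\PP(\OO\oplus\OO(-n))$ over $\PP^1_k$---by a finite sequence of blow-ups at closed points. The minimal models are strongly liftable: $\PP^2_k$ by part~(i), and the Hirzebruch surfaces by Proposition~\ref{5.2} (equivalently, they are toric, hence strongly liftable by Theorem~\ref{3.2}). The essential step is then to show that strong liftability is preserved under blowing up a closed point, which I expect to be the main obstacle: given a strongly liftable $S$ with fixed lifting $\wt{S}$, a point $x\in S$ lifts to a section $\Spec W_2(k)\ra\wt{S}$, one forms the blow-up $\wt{S}'$ of $\wt{S}$ along this lifted center, and one must verify both that $\wt{S}'$ is again a valid lifting of the blown-up surface $S'$ and that every prime divisor on $S'$ still lifts. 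For the latter I would use that $\Pic(S')=\Pic(S)\oplus\Z E$ with $E$ the exceptional curve, that $E$ lifts as the exceptional divisor of $\wt{S}'$, and that the strict transform of any prime divisor on $S$ can be lifted by combining a lift of its image with the lift of $E$ and controlling multiplicities; the surjectivity of $r$ on $H^1(\OO^*)$ propagates because $H^2(S',\OO_{S'})=H^2(S,\OO_S)=0$ for a rational surface. Assembling these blow-up steps inductively, starting from the liftable minimal model, yields strong liftability of $S$.
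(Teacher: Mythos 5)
Your plan for part (ii) and your overall architecture for part (iii) are sound and in fact mirror the paper's: the paper proves Theorem \ref{5.3} essentially by citation, with (i) and (ii) already established in \cite{xie10}, and (iii) obtained by combining \cite[Proposition 2.6]{xie10} (strong liftability is preserved by blowing up a closed point) with Proposition \ref{5.2} (equivalently Theorem \ref{3.2}) for the minimal rational surfaces. However, two of your steps have genuine gaps. The first is the curve case in (i). A prime divisor on a smooth projective curve $C$ is a single closed point $P$, so the class $\OO_C(P)$ is fixed of degree one and you cannot ``pass to a line bundle of large enough degree''; moreover the $H^1$-vanishing route to condition (ii) of Proposition \ref{2.5} fails for higher genus, since $h^1(C,\OO_C(P))=g-1>0$ when $g\geq 2$. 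Remark \ref{2.7} shows this is not a mere technicality: there exist liftings $\wt{\LL}$ of $\OO_C$ none of whose sections lift, so section-lifting cannot be expected for an arbitrary lifting of the line bundle. The correct (and easy) argument works at the level of the divisor, not the bundle: since $\wt{C}$ is smooth, hence formally smooth, over $W_2(k)$, the closed point $\spec k\ra \wt{C}$ lifts to a section $\spec W_2(k)\ra\wt{C}$, whose image is a closed subscheme flat over $W_2(k)$ lifting $P$.

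The second and more serious gap is the blow-up step in (iii), which you correctly identify as the main obstacle but do not prove; the difficulty is hidden in the phrase ``controlling multiplicities''. Fix the lifted center $\wt{x}\subset\wt{S}$ (it must be fixed once and for all, since it determines $\wt{S}'$), and let $B\neq E$ be a prime divisor on $S'$ with image $D$ and $m=\mathrm{mult}_x D$, so that $B=\pi^*D-mE$. Strong liftability of $S$ hands you \emph{some} lifting $\wt{D}$ of $D$, but the Cartier divisor $\wt{\pi}^*\wt{D}-m\wt{E}$ need not be effective: in local coordinates $(\wt{u},\wt{v})$ at $\wt{x}$, the equation $\wt{f}$ of $\wt{D}$ is only constrained modulo $p$, so it may contain terms $p\,b_{ij}\wt{u}^i\wt{v}^j$ with $i+j<m$, and the pullback of such a term is not divisible by the $m$-th power of the equation of $\wt{E}$. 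The strict transform of $\wt{D}$ is a lifting of $B$ only when $\wt{f}$ can be taken to lie in $\II_{\wt{x}}^m$, i.e.\ when $\wt{D}$ passes through $\wt{x}$ with full multiplicity $m$; arranging this amounts to correcting the defining section by $p\,t$ for some global section $t\in H^0(S,\OO_S(D))$ with a prescribed $(m-1)$-jet at $x$, and such a $t$ need not exist on an arbitrary surface. This compatibility statement is exactly the content of \cite[Proposition 2.6]{xie10}, which the paper invokes as a black box; without proving it (or citing it), your induction from the minimal model does not go through.
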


\begin{proof}
Both (i) and (ii) have already been proved in \cite{xie10}. (iii) follows 
from \cite[Proposition 2.6]{xie10} and Proposition \ref{5.2}.
\end{proof}

\small

\textsc{School of Mathematical Sciences, Fudan University, 
Shanghai 200433, China}

\textit{E-mail address}: \texttt{qhxie@fudan.edu.cn}

\end{document}